\numberwithin{equation}{section}
\def\H{\mathcal H}
\def\R{\mathbb R}
\def\N{\mathbb N}
\def\e{\varepsilon}
\def\Div{{\rm div}\,}
\def\Id{{\rm Id}}
\def\spt{{\rm spt}}
\def\00{{\bf 0}}
\newcommand{\tr}{\mbox{tr }}
\renewcommand{\Div}{{\rm div \,}}
\newcommand{\cc}{\subset\subset}
\def\Im{{\rm Im}}
\def\Ker{{\rm Ker}}
\newcommand\res{\mathop{\hbox{\vrule height 7pt width .3pt depth 0pt \vrule height .3pt width 5pt depth 0pt}}\nolimits}
\def\FF{\mathbf{F}}
\def\Id{\mathrm{Id}}
\def\Im{\mathrm{Im}}
\newcommand{\weakto} {\rightharpoonup}                 
\newcommand{\weakstarto}{\stackrel {*} {\weakto}}      
\newtheorem{theorem}{Theorem} [section]
\newtheorem{lemma}[theorem]{Lemma}
\theoremstyle{definition}
\newtheorem{definition}[theorem]{Definition}
\newtheorem*{ack}{Acknowledgements}
\newtheorem*{question}{Question}
\title[Rectifiability of varifolds with locally bounded first variation]{Rectifiability of varifolds with locally bounded first variation with  respect to anisotropic surface energies}
\author[G. De Philippis]{Guido De Philippis}
\address{G.D.P.: SISSA, Via Bonomea 265, 34136 Trieste, Italy}
\email{guido.dephilippis@sissa.it}
\author[A. De Rosa]{Antonio De Rosa}
\address{A.D.R.: Institut f\"ur Mathematik, Universit\"at Z\"urich, Winterthurerstrasse 190, CH-8057 Z\"urich, Switzerland}
\email{antonio.derosa@math.uzh.ch}
\author[F. Ghiraldin]{Francesco Ghiraldin}
\address{F.G.: Max Planck Institut for Mathematics in the Sciences, Inselstrasse 22, 04103 Leipzig, Germany}
\email{Francesco.Ghiraldin@mis.mpg.de}
\begin{document}

\begin{abstract}
We extend Allard's celebrated rectifiability theorem  to the setting of  varifolds  with locally bounded  first variation with respect to an anisotropic integrand. In particular, we identify a sufficient and necessary  condition on the integrand to obtain the rectifiability of every \(d\)-dimensional varifold with locally bounded first variation and positive \(d\)-dimensional density. 
In codimension one, this condition is shown to be equivalent to  the strict convexity  of the integrand with respect to the tangent plane.

\end{abstract}

\maketitle

\section{Introduction}

Allard's rectifiability Theorem,  \cite{Allard}, asserts that every \(d\)-varifold in $\R^n$ with locally bounded  (isotropic) first variation is  \(d\)-rectifiable when restricted to the set of points in $\R^n$ with positive lower \(d\)-dimensional density. It is a natural  question whether this result holds for varifolds whose first variation with respect to an anisotropic integrand is locally bounded.

 More specifically,  for an open set $\Omega\subset\R^n$ and a positive \(C^1\) integrand 
 
 \[
 F:  \Omega\times G(n,d)\to \R_{>0}:=(0,+\infty),
  \]
  where $G(n,d)$ denotes the Grassmannian of $d$-planes in $\R^{n}$, we define  the \emph{anisotropic energy} of a $d$-varifold \(V\in\mathbb V_d(\Omega)\) as 
 \begin{equation}\label{eq:eVintro}
\FF(V,\Omega) := \int_{\Omega\times G(n,d)} F(x,T)\, dV(x,T).
\end{equation}
We also define its  {\em anisotropic first variation} as the order one distribution whose action on \(g\in C_c^1(\Omega,\R^n)\) is given by 
 \begin{equation*}\label{eq:firstvarintro}
 \begin{split}
 \delta_F V(g): =&\,\frac{d}{dt}\FF\big ( \varphi_t^{\#}V\big)\Big|_{t=0}
 \\
 =&\int_{\Omega\times G(n,d)}  \Big[\langle d_xF(x,T),g(x)\rangle+  B_F(x,T):Dg(x)  \Big] dV(x,T),
\end{split}
 \end{equation*}
 where  \(\varphi_t(x)=x+tg(x)\), \( \varphi_t ^{\#}V\) is the image varifold of \(V\) through \(\varphi_t\), 
 \(B_F(x,T)\in \R^n\otimes\R^n\) is an explicitly computable \(n\times n\) matrix 
  and  \(\langle A,B\rangle :=\tr  A^* B\) for \(A,B\in \R^n\otimes \R^n\), see Section~\ref{s:notation} below for the precise definitions and the relevant computations. 
  We  have then the following:
 
\begin{question} 
Is it true that for every $V\in\mathbb V_d(\Omega)$ such that \(\delta_F V\) is a Radon measure in \(\Omega\), the associated varifold $V_*$ defined as\footnote{Here \(\Theta^d_*(x,V)\) is the lower $d$-dimensional density of $V$ at the point $x$, see Section~\ref{s:notation}.}
\begin{equation}\label{v*intro}
V_*:=V\res \{x\in \Omega : \Theta^d_*(x,V)>0\}\times G(n,d)
\end{equation}
is $d$-rectifiable?
\end{question}
We will show    that this is true if (and only if in the case of autonomous integrands) \(F\)  satisfies the following \emph{atomic condition} at every point \(x\in \Omega\).
\begin{definition}\label{atomica}
For a given integrand  $F\in C^1(\Omega\times G(n,d)) $, \(x\in \Omega\)  and a Borel  probability measure $\mu \in \mathcal P (G(n,d))$, let us define  
\begin{equation}\label{eq:A}
A_x(\mu):=\int_{G(n,d)} B_F(x,T)d\mu(T)\in \R^n\otimes \R^n.
\end{equation}
We say that \(F\) verifies the \emph{atomic condition} $(AC)$ at \(x\) if the following two conditions are satisfied:
\begin{itemize}
\item[(i)]  \(\dim\Ker A_x(\mu)\le n-d\) for all \(\mu \in \mathcal P (G(n,d))\),
\item[(ii)]  if  \(\dim\Ker A_x(\mu)= n-d\), then  \(\mu=\delta_{T_0}\) for some \(T_0\in G(n,d)\).
\end{itemize}
\end{definition}
The following Theorem is the main result of this paper, see again Section~\ref{s:notation} for the relevant (standard) definitions:
\begin{theorem}\label{thm:main}
 Let $F\in C^1(\Omega\times G(n,d),\R_{>0}) $ be a positive integrand and let us define 
 \begin{equation}\label{eq:V}
 \mathscr V_F(\Omega)=\\
 \Big\{ V\in\mathbb V_d(\Omega):  \mbox{\(\delta_F V\) is a Radon measure}\Big\}.
  \end{equation}
 Then we have the following:
 \begin{itemize}
 \item[(i)] If \(F\) satisfies the atomic condition at every \(x\in \Omega\), then for every \(V\in  \mathscr V_F(\Omega)\) the associated varifold $V_*$ defined in \eqref{v*intro} is \(d\)-rectifiable.
 \item[(ii)] Assume that \(F\) is autonomous, i.e. that \(F(x,T)\equiv F(T)\): then every $V_*$ associated to a varifold \(V\in  \mathscr V_F(\Omega)\) is \(d\)-rectifiable if and only if \(F\) satisfies the atomic condition.
 \end{itemize}
 \end{theorem}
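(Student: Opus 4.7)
The plan is a blow-up argument reducing rectifiability to a rigidity statement for varifolds stationary with respect to a frozen autonomous integrand. Fix $V \in \mathscr V_F(\Omega)$ and $x_0 \in \Omega$ with $\Theta^d_*(x_0,V) > 0$; after translation, assume $x_0 = 0$, and set $F_0(T) := F(0,T)$. Consider the rescalings $V_r := (\eta_r)_\# V$, $\eta_r(x) = x/r$. Combining the $C^1$ regularity of $F$ with the local finiteness of $\delta_F V$ shows $\delta_{F_0} V_r \to 0$ in total variation on compact sets as $r \to 0$, while uniform local mass bounds (following from the positive lower density together with the two-sided bounds on $F$) yield, along a subsequence, a weak-$*$ limit $C$ with $\Theta^d_*(0,C) > 0$ satisfying the frozen stationarity condition
\[
\int_{\R^n \times G(n,d)} B_{F_0}(T) : Dg(x)\, dC(x,T) = 0 \qquad \forall\, g \in C_c^1(\R^n;\R^n).
\]
By a classical Preiss--Mattila-type rectifiability criterion, it suffices to show that every such tangent $C$ has the form $\theta\, \H^d \res T_0 \otimes \delta_{T_0}$ for some $T_0 \in G(n,d)$ and $\theta > 0$.

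\textbf{Rigidity via (AC) -- the main obstacle.} Disintegrate $C = \|C\| \otimes \nu_x$ with $\nu_x \in \mathcal P(G(n,d))$; setting $A_0(\mu) := \int B_{F_0}(T)\,d\mu(T)$, the stationarity condition becomes the distributional identity
\[
\mathrm{div}\bigl(A_0(\nu_x)\,\|C\|\bigr) = 0.
\]
Blowing up $\|C\|$ at a $\|C\|$-generic point and applying a Lebesgue differentiation argument to the map $x\mapsto \nu_x$, one freezes $\nu_x$ to a constant $\mu \in \mathcal P(G(n,d))$. The resulting constant-coefficient equation forces $\|C\|$ to be translation-invariant along every direction in $\mathrm{Im}\, A_0(\mu)$. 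Clause (i) of (AC) gives $\dim \mathrm{Im}\, A_0(\mu) \geq d$, and compatibility with the $d$-dimensional positive density of $\|C\|$ pins this dimension equal to $d$ and concentrates the support of $\|C\|$ on the affine $d$-plane $T_0 := \mathrm{Im}\, A_0(\mu)$. Clause (ii) of (AC) then upgrades $\mu$ to $\delta_{T_0}$, whence $\nu_x \equiv \delta_{T_0}$. This rigidity is the technical heart of the proof: since the anisotropic setting admits no monotonicity formula, the flatness must be extracted purely from the algebraic content of (AC) together with iterated blow-ups and the classification of constant-coefficient divergence-free matrix-valued measures.

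\textbf{Part (ii): counterexamples when (AC) fails.} For the converse in the autonomous case, I would construct non-rectifiable $V \in \mathscr V_F(\R^n)$ whenever some $\mu \in \mathcal P(G(n,d))$ violates (AC). If clause (ii) fails -- $\mu \neq \delta_{T_0}$ yet $\dim \ker A(\mu) = n-d$ -- set $T_0 := \mathrm{Im}\, A(\mu) \in G(n,d)$ and define $V := \H^d\res T_0 \otimes \mu$. Since $A(\mu)$ is a constant matrix whose kernel is transverse to $T_0$, integrating by parts along $T_0$ yields $\delta_F V = 0$; on the other hand $\Theta^d_*(\cdot,V) > 0$ on $T_0$, while $V$ cannot be $d$-rectifiable because $\nu_x \equiv \mu \neq \delta_{T_0}$. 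If clause (i) fails -- $\dim \ker A(\mu) \geq n-d+1$ -- an analogous construction, supported on a $d$-plane $T_0$ chosen so that $\ker A(\mu)$ contains a direction transverse to $T_0$ (the extra kernel dimension supplying this transverse direction, which spreads the varifold off any fixed $d$-graph), again produces a stationary non-rectifiable $d$-varifold, completing the necessity.
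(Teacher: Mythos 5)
Your Part (ii) is essentially the paper's construction modulo a transpose error: you set $T_0 := \mathrm{Im}\,A(\mu)$, but the stationarity computation $\delta_F V = -\langle g, A(\mu)\,D(\H^k\res W)\rangle$ requires $W^\perp \subset \ker A(\mu)$, i.e.\ $W \supset (\ker A(\mu))^\perp = \mathrm{Im}\,A(\mu)^*$ (note the adjoint; $B_F(x,T)$ is not symmetric in general). With that fixed, the construction goes through, including your variant for the case where clause (i) fails (the paper instead uses a $k$-plane with $k = \dim\mathrm{Im}\,A(\mu)^* < d$, giving infinite density rather than density one; both are valid), although your explanatory remark ``spreads the varifold off any fixed $d$-graph'' is off the mark — non-rectifiability here comes from the Grassmannian marginal $\mu$ not being a Dirac mass, not from the spatial measure being spread out.

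In Part (i) there is a genuine gap, and it is precisely what the paper identifies as the hard step. You pass from ``every tangent measure $\|C\|$ is translation-invariant along a $d$-plane $T_x$ and has positive lower $d$-density'' to ``$\|C\|$ is concentrated on $T_x$.'' This does not follow. A tangent measure that is translation-invariant along $T_x$ has the form $\mathcal L^d\res T_x \otimes \gamma$ for some (possibly diffuse) measure $\gamma$ on $T_x^\perp$, and positive lower density at the origin only forces $\gamma(\{0\})$-type mass near $0$, not $\gamma = c\,\delta_0$. Worse, the rectifiability criterion you invoke (Preiss--Mattila, the paper's Lemma~\ref{rect}) requires \emph{finite upper density} $\mu$-a.e., and without an absolute-continuity estimate there is nothing preventing $\Theta^{d*} = +\infty$ on a set of positive $\|V_*\|$-measure. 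The paper supplies exactly this: Lemma~\ref{abscont} shows $\|V_*\| \ll \H^d$, and its proof is an anisotropic Strong Constancy Lemma (Lemma~\ref{t:integrality}, adapting Allard's integrality argument and the $\mathscr A$-free measure structure theory of De Philippis--Rindler) applied via a careful blow-up at a hypothetical singular point. This is the technical core of the whole result, and your proposal does not touch it — the ``algebraic content of (AC) together with iterated blow-ups'' you invoke only yields translation invariance and concentration of the Grassmannian disintegration (the paper's Lemma~\ref{atomic}), not the missing upper density bound.
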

 
For  the area integrand, \(F(x,T)\equiv 1\), it is easy to verify  that \(B_{F}(x,T)=T\) where we are identifying  \(T\in G(n,d)\)  with the matrix \(T\in (\R^n\otimes\R^n)_{\rm sym}\)  representing the  orthogonal projection onto \(T\), see  Section~\ref{s:notation}. Since \(T\) is  positive semidefinite (i.e. \(T\ge 0\)),  it is easy to check that the \((AC)\) condition is satisfied. In particular  Theorem~\ref{thm:main} provides a new independent proof of Allard's rectifiability theorem.

 \medskip
Since the  atomic condition \((AC)\) is essentially necessary to the validity  of the  rectifiability Theorem~\ref{thm:main}, it is relevant to relate  it to the previous known notions of \emph{ellipticity} (or \emph{convexity}) of \(F\) with respect to the ``plane'' variable \(T\). This task seems to be quite hard in the general case. For  \(d=(n-1)\) we can however completely characterize the integrands satisfying \((AC)\).  Referring again to Sections~\ref{s:notation} and~\ref{sec:ac} 
 for a more detailed discussion, we  recall here 
that  in this case the integrand \(F\) can be equivalently thought as a positive one-homogeneous even function \(G: \Omega\times \mathbb R^n \to \R_{\ge 0}\) via the identification
\begin{equation}\label{eq:G}
G(x, \lambda  \nu):=|\lambda| F(x,  \nu^{\perp}) \qquad\textrm{for all \(\lambda \in \R\) and  \(\nu \in \mathbb S^{n-1}\)}.
\end{equation}
The atomic condition then turns out to be equivalent to the {\em strict  convexity} of \(G\), more precisely:

\begin{theorem}\label{thm:main2}
An integrand \(F: C^1(\Omega\times G(n,n-1),\R_{>0})\) satisfies the atomic condition at \(x\) if and only if the function \(G(x,\cdot)\) defined in~\eqref{eq:G}  is strictly convex.  
\end{theorem}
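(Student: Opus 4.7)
My plan is to translate the atomic condition at $x$ into an analytic condition on $G(x,\cdot)$ by computing $B_F(x,\nu^\perp)$ explicitly. A direct computation of the first variation of $\FF$ on smooth hypersurfaces, combined with the $1$-homogeneity of $G(x,\cdot)$ and Euler's identity $\langle \nabla_\nu G(x,\nu),\nu\rangle = G(x,\nu)$, yields
\[
B_F(x,\nu^\perp) = G(x,\nu)\,\Id - \nu\otimes\nabla_\nu G(x,\nu),
\]
from which $\Ker B_F(x,\nu^\perp) = \R\nu$. Lifting any $\mu\in\mathcal P(G(n,n-1))$ to an even probability measure $\tilde\mu$ on $\mathbb S^{n-1}$,
\[
A_x(\mu) = c_\mu\,\Id - \int_{\mathbb S^{n-1}}\nu\otimes\nabla_\nu G(x,\nu)\,d\tilde\mu(\nu),\qquad c_\mu:=\int G(x,\nu)\,d\tilde\mu > 0,
\]
so the task reduces to characterising when $A_x(\mu)$ is singular.

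For the forward direction ``strict convexity $\Rightarrow (AC)$'', assume $G(x,\cdot)$ is strictly convex and $A_x(\mu)v = 0$ with $v\neq 0$, i.e.\ $c_\mu v = \int \langle\nabla_\nu G(x,\nu),v\rangle\,\nu\,d\tilde\mu$. Applying $G(x,\cdot)$ and invoking the sublinear Jensen inequality together with the subgradient bound $|\langle\nabla_\nu G(x,\nu),v\rangle|\le G(x,v)$ (both valid because $G(x,\cdot)$ is convex and positively $1$-homogeneous), I obtain
\[
c_\mu\,G(x,v) = G(x, c_\mu v) \le \int \bigl|\langle\nabla_\nu G(x,\nu),v\rangle\bigr|\,G(x,\nu)\,d\tilde\mu \le c_\mu\, G(x,v),
\]
so equality must hold throughout. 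In particular $|\langle\nabla_\nu G(x,\nu),v\rangle| = G(x,v)$ for $\tilde\mu$-a.e.\ $\nu$, and the strict convexity assumption then pins $\nu\in\{\pm v/|v|\}$ $\tilde\mu$-a.e., i.e.\ $\mu = \delta_{(v/|v|)^\perp}$. Both clauses of $(AC)$ at $x$ follow immediately.

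For the converse I argue contrapositively. If $G(x,\cdot)$ is convex but not strictly convex, the Gauss map of $\{G(x,\cdot)\le 1\}$ fails to be injective, so I can pick $\nu_1,\nu_2\in\mathbb S^{n-1}$ with $\nu_1\neq\pm\nu_2$ and $\nabla_\nu G(x,\nu_1)=\nabla_\nu G(x,\nu_2)=:p$. Setting $\mu = \tfrac12(\delta_{\nu_1^\perp}+\delta_{\nu_2^\perp})$, a direct computation gives
\[
A_x(\mu) = \frac{G(x,\nu_1)+G(x,\nu_2)}{2}\,\Id - \frac{\nu_1+\nu_2}{2}\otimes p,
\]
and Euler's identity yields $A_x(\mu)(\nu_1+\nu_2)=0$ with $\nu_1+\nu_2\neq 0$ while $\mu$ is not a Dirac, contradicting clause (ii) of $(AC)$. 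When $G(x,\cdot)$ is not even convex, I would first derive convexity of $G(x,\cdot)$ as a consequence of $(AC)$ itself by testing against general two-atom measures $\alpha\delta_{\nu_0^\perp}+(1-\alpha)\delta_{\nu_1^\perp}$: the existence of a nontrivial kernel in $\mathrm{span}\{\nu_0,\nu_1\}$ reduces to the scalar identity $\langle\nabla_\nu G(x,\nu_0),\nu_1\rangle\langle\nabla_\nu G(x,\nu_1),\nu_0\rangle = G(x,\nu_0)G(x,\nu_1)$, and a continuity/IVT argument along a smooth path in $\mathbb S^{n-1}$ should produce such a pair out of any failure of the subgradient inequality, again contradicting $(AC)$.

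The main technical obstacle is this last non-convex sub-case: making the intermediate-value argument rigorous while ensuring the selected $\nu_1\neq\pm\nu_0$ delivers matching signs in the scalar identity is delicate. An alternative route I would keep in reserve is a second-order analysis at a failure point of convexity: replacing one atom by a small even measure concentrated near $\nu_0$ and expanding $A_x(\mu_\varepsilon)$ to second order in the support size $\varepsilon$, so that the non-positivity of a relevant second variation of $G(x,\cdot)$ at $\nu_0$ produces a nontrivial kernel for $\varepsilon$ small, once again contradicting $(AC)$.
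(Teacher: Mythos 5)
Your forward direction (strict convexity $\Rightarrow$ $(AC)$) is correct and takes a genuinely different route from the paper. You apply $G(x,\cdot)$ directly to the kernel identity $c_\mu v=\int\langle\nabla_\nu G,v\rangle\,\nu\,d\tilde\mu$ and squeeze via the sublinear Jensen inequality and the subgradient bound $|\langle\nabla_\nu G(\nu),v\rangle|\le G(v)$; the paper instead pairs the kernel vector $\bar\nu$ with $d_\nu G(\bar\nu)$ to obtain $0=\int\big(G(\bar\nu)G(\nu)-\langle d_\nu G(\bar\nu),\nu\rangle\langle d_\nu G(\nu),\bar\nu\rangle\big)\,d\mu$ and uses pointwise positivity of the integrand. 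Both hinge on the same characterization $G(\nu)>|\langle d_\nu G(\bar\nu),\nu\rangle|$ for $\nu\neq\pm\bar\nu$; yours is a legitimate and arguably more conceptual alternative.

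The converse direction, however, has a genuine gap that you yourself flag. Your argument for ``$(AC)\Rightarrow$ strict convexity'' splits into a convex case (handled cleanly via a repeated gradient on a flat face and the two-atom measure) and a non-convex case, which is only sketched: the intermediate-value argument for producing a pair $(\nu_0,\nu_1)$ satisfying the scalar identity, and the second-order alternative, are acknowledged as unfinished. This is precisely where the paper's proof avoids any case split. From $(AC)$ applied to the even two-atom measure $\tfrac12(\delta_{\nu^\perp}+\delta_{\bar\nu^\perp})$, full rank of $A(\mu)$ forces $A(\mu)\nu$ and $A(\mu)\bar\nu$ to be linearly independent, which is exactly the non-vanishing of the scalar quantity $h(\nu):=G(\nu)G(\bar\nu)-\langle d_\nu G(\bar\nu),\nu\rangle\langle d_\nu G(\nu),\bar\nu\rangle$ for every $\nu\neq\pm\bar\nu$. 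Connectedness of $\mathbb S^{n-1}\setminus\{\pm\bar\nu\}$ (for $n\ge3$, with the $n=2$ case handled by evenness), together with positivity of $h$ at some $\nu$ orthogonal to $d_\nu G(\bar\nu)$, then pins the sign to $h>0$; finally a short projection argument — set $P_{\bar\nu}\nu=\tfrac{\langle d_\nu G(\bar\nu),\nu\rangle}{G(\bar\nu)}\bar\nu$, interpolate along $\nu_t=t\nu+(1-t)P_{\bar\nu}\nu$, and differentiate $G(\nu_t)$ in $t$ — converts $h>0$ into the strict convexity inequality $G(\nu)>\langle d_\nu G(\bar\nu),\nu\rangle$. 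No prior convexity of $G$ is assumed at any step. To repair your proof you would need to carry out exactly this kind of global argument rather than restrict to the convex case; as written, the necessity direction is incomplete.
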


As we said, we have not been able to obtain   a simple characterization  in the general  situation
 when \(2\le d\le (n-2)\) (while for \(d=1\) the reader can easily verify that an analogous version of Theorem~\ref{thm:main2} holds true). 
 In this respect, let us  recall that the study of convexity notions for integrands defined 
 on the \(d\)-dimensional Grassmannian is an active field of research, where several basic questions are still open, see \cite{BuEwSh63,BuEwSh62} and the survey \cite{PaTho04}.

\medskip
Beside its theoretical interest, the above Theorem has some relevant applications in the study of existence of minimizers of geometric variational problems defined on class of rectifiable sets. 
It can be indeed shown that given an \(\FF\)-minimizing sequence of sets, 
 the limit of the varifolds naturally associated to them is \(\FF\)-stationary (i.e. it satisfies \(\delta_F V =0\)) and has density bounded away from zero. 
 Hence, if \(F\) satisfies \((AC)\), this varifold is rectifiable and it can be shown that its support minimizes $\FF$, see the forthcoming papers~\cite{DePDeRGhi3,DeRosa} and also~\cite{DelGhiMag,DePDeRGhi,DeLDeRGhi16} for similar results obtained with different techniques. 

\medskip

We conclude this introduction with a brief sketch of the proof of Theorem~\ref{thm:main}. The original proof of Allard in~\cite{Allard} (see also~\cite{Luckhaus08} for a quantitative improvement under slightly more general assumptions)  for varifolds with locally bounded variations with respect to the area integrand  heavily relies on the monotonicity formula, which is strongly linked  to the isotropy of  the area integrand, \cite{Allardratio}. 
A completely different strategy must hence be used  to prove Theorem~\ref{thm:main}. 

The idea is to use the notion of {\em tangent measure} introduced by Preiss,~\cite{Preiss}, in order to understand the local behavior of a varifold $V$ with locally bounded first variation. Indeed at \(\|V_*\|\) almost every point\footnote{Here \(\|V_*\|\) is the projection on \(\R^n\) of the measure \(V_*\), see Section~\ref{s:notation}.}, condition \((AC)\) is used  to show that every tangent measure is translation invariant along {\em at least} \(d\) (fixed) directions, while the 
positivity of the lower  \(d\)-dimensional density ensures that there exists at least one tangent measure that  is invariant along {\em at most} \(d\) directions.  
The combination of these facts allows to show that the ``Grassmannian part'' of the varifold $V_*$ at $x$ is a Dirac delta \(\delta_{T_x}\) on a fixed plane 
\({T_x}\), see Lemma~\ref{atomic}. A key step is then to show that \(\|V_*\|\ll \H^{d}\): this is achieved by using ideas borrowed from~\cite{Allard84BOOK} and~\cite{DePRin16}. Once this is obtained, a simple rectifiability criterion, based on the results in~\cite{Preiss} and  stated in Lemma~\ref{rect}, allows to show that \(V_*\) is \(d\)-rectifiable.

\begin{ack}
G.D.P. is supported by the MIUR SIR-grant {\it Geometric Variational Problems} (RBSI14RVEZ). A.D.R. is supported by SNF 159403 {\it Regularity questions in geometric measure theory}. The authors would like to thank U. Menne for some very accurate comments on a preliminary version of the paper that allowed  to  improve the main result.
\end{ack}

\section{Notations and preliminary results}\label{s:notation}
We work on an open set \(\Omega\subset \R^{n}\) and we set   $B_r(x)=\{y\in\R^{n}:|x-y|<r\}$, \(B_r=B_r(0)\)  and $B:=B_{1}(0)$. Given a \(d\)-dimensional vector space \(T\), we will denote \(B^d_r(x)=B_r(x)\cap (x+T)\) and  similarly for \(B_r^d\) and \(B^d\).  

 For a matrix  \(A\in \R^n\otimes\R^n\),  \(A^*\) denotes its transpose. Given   \(A,B\in \R^n\otimes\R^n\) we define \( A:B=\tr A^* B=\sum_{ij} A_{ij} B_{ij}\), so that \(|A|^2= A:A\).

\subsection{Measures  and rectifiable sets}
We denote by \(\mathcal M_+(\Omega)\) (respectively \(\mathcal M(\Omega,\R^m)\), \(m\ge1\)) the set of positive (resp. \(\R^m\)-valued) Radon measure on \(\Omega\). Given a Radon measure \(\mu\) we denote by \(\spt \mu\) its support. For a  Borel set \(E\),  \(\mu\res E\) is the  restriction of \(\mu\) to \(E\), i.e. the measure defined by \([\mu\res E](A)=\mu(E\cap A)\). For a \(\R^m\)-valued Radon measure \(\mu\in \mathcal M(\Omega,\R^m)\) we denote by \(|\mu|\in \mathcal M_+(\Omega)\) its total variation and we recall that, for all open sets \(U\),
\[
|\mu|(U)=\sup\Bigg\{ \int\big \langle \varphi(x) ,d\mu(x)\big\rangle\,:\quad \varphi\in C_c^\infty(U,\R^m),\quad \|\varphi\|_\infty \le 1 \Bigg\}.
 \] 
Eventually, we denote by  $\H^d$   the  $d$-dimensional Hausdorff measure and for a \(d\)-dimensional vector space \(T\subset \R^n\) 
we will often identify \(\H^d\res T\) with the \(d\)-dimensional Lebesgue measure \(\mathcal L^d\) on \(T\approx \R^d\).

A set \(K\subset \R^n\) is said to be \(d\)-rectifiable if it can be covered, 
up to an \(\H^d\)-negligible set, by countably many \(C^1\) $d$-dimensional 
submanifolds.  In the following we will only consider \(\H^d\)-measurable sets. Given a $d$-rectifiable set $K$, we denote $T_xK$ the approximate tangent space of $K$ at $x$, which exists for $\H^d$-almost every point $x \in K$, \cite[Chapter 3]{SimonLN}.  A positive  Radon measure $\mu\in \mathcal M_+(\Omega)$ is said to be  $d$-rectifiable if there exists a $d$-rectifiable set $K\subset \Omega$ such that $\mu= \theta\H^d \res K$ for some Borel function \(\theta: \R^n\to \R_{>0}\).

For  \(\mu\in \mathcal M_+(\Omega) \) we consider its lower  and upper  \(d\)-dimensional densities at \(x\):
\[
\Theta_*^d(x,\mu)=\liminf _{r\to 0} \frac{\mu(B_r(x))}{ \omega_d r^d}, \qquad \Theta^{d*}(x,\mu)=\limsup_{r\to 0} \frac{\mu(B_r(x))}{ \omega_d r^d},
\]
where \(\omega_d=\H^d(B^d)\) is the measure of the \(d\)-dimensional unit ball in \(\R^d\). In case these  two limits are equal, we denote by \(\Theta^d(x,\mu)\) their common value. Note that if $\mu= \theta\H^d \res K$  with \(K\) rectifiable 
then \(\theta(x)=\Theta_*^d(x,\mu)=\Theta^{d*}(x,\mu)\) for \(\mu\)-a.e. \(x\), see~\cite[Chapter 3]{SimonLN}.

If \(\eta :\R^n\to \R^n\) is a Borel map and \(\mu\) is a Radon measure, we let \(\eta_\# \mu=\mu\circ \eta^{-1}\) be the push-forward of \(\mu\) through \(\eta\). Let  $\eta^{x,r}:\R^n\rightarrow \R^n$ be  the dilation map, $\eta^{x,r}(y)= (y-x)/r$. For a positive   Radon measure $\mu\in \mathcal M_+(\Omega) $,  \(x\in \spt \mu\cap \Omega \) and \(r\ll1\), we define 
\begin{equation}\label{blowup}
\mu_{x,r}=\frac{1}{\mu(B_{r}(x))} (\eta^{x,r}_\#\mu)\res B.
\end{equation}
The normalization in~\eqref{blowup} implies, by the Banach-Alaoglu Theorem, that for every sequence $r_i\to 0$ there exists a subsequence $r_{i_j}\to 0$ and a Radon measure $\sigma \in \mathcal M_+ (B) $, called {\em tangent measure} to \(\mu\) at \(x\), such that 
\[
\mu_{x,r_{i_j}} \weakstarto\sigma.
\]
 We collect all tangent measures  to \(\mu\) at \(x\) into \({\rm Tan}(x,\mu)\subset \mathcal M_+ (B)\). 
 The next Lemma shows that \({\rm Tan}(x,\mu)\) is not trivial at \(\mu\)-almost every point  where \(\mu\) has positive lower \(d\)-dimensional density and that furthermore there is always a tangent measure which looks at most \(d\)-dimensional on a prescribed ball (a similar argument can be used to show that \({\rm Tan}(x,\mu)\) is always not trivial at \(\mu\) almost every point without any assumption on the \(d\)-dimensional density,  see~\cite[Corollary 2.43]{AFP}).
 
 \begin{lemma}\label{almeno}
Let $\mu\in\mathcal M_+ (\Omega)$ be a Radon measure. Then for every $x \in \Omega$ such that $\Theta_*^d(x,\mu)>0$ and for every $t\in(0,1)$, there exists a tangent measure $\sigma_t\in {\rm Tan}(x,\mu)$ satisfying
\begin{equation}\label{supre}
\sigma_t(\overline{B_t})\geq t^d.
\end{equation}
\end{lemma}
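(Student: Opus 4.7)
The plan is to construct $\sigma_t$ as a subsequential weak-$*$ limit of the rescaled measures $\mu_{x,r_i}$ along a sequence $r_i\to 0$ that \emph{realizes} the liminf in the definition of $\Theta_*^d(x,\mu)$. The heuristic is that choosing $r_i$ so that $\mu(B_{r_i}(x))/(\omega_d r_i^d)$ attains its minimal asymptotic value forces the denominator in $\mu(B_{r_it}(x))/\mu(B_{r_i}(x))$ to be as small as possible, and the numerator — being a liminf at a smaller scale — cannot be smaller asymptotically, which yields a lower bound of $t^d$.

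Concretely, set $\alpha:=\Theta_*^d(x,\mu)>0$ and pick $r_i\searrow 0$ with $\mu(B_{r_i}(x))/(\omega_d r_i^d)\to \alpha$. For $t\in(0,1)$ fixed, I would rewrite
\[
\frac{\mu(B_{r_it}(x))}{\mu(B_{r_i}(x))}
= t^d\cdot \frac{\mu(B_{r_it}(x))/(\omega_d(r_it)^d)}{\mu(B_{r_i}(x))/(\omega_d r_i^d)}.
\]
The denominator on the right tends to $\alpha$ by choice of $r_i$, while the numerator has $\liminf$ bounded below by $\alpha$, because $r_it\to 0$ and $\Theta_*^d(x,\mu)=\alpha$. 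Hence
\[
\liminf_{i\to\infty}\frac{\mu(B_{r_it}(x))}{\mu(B_{r_i}(x))}\ge t^d.
\]

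Next, by the Banach–Alaoglu theorem (noting that $\mu_{x,r_i}(B)\le 1$ for all $i$), up to a subsequence $\mu_{x,r_i}\weakstarto \sigma_t$ for some $\sigma_t\in\mathrm{Tan}(x,\mu)$. Since $t<1$, the set $\overline{B_t}$ is a compact subset of the open ball $B$ on which the measures live, so upper semicontinuity of weak-$*$ limits on compact sets gives
\[
\sigma_t(\overline{B_t})\ge \limsup_{i\to\infty}\mu_{x,r_i}(\overline{B_t})
\ge \limsup_{i\to\infty}\frac{\mu(B_{r_it}(x))}{\mu(B_{r_i}(x))}\ge t^d,
\]
which is exactly \eqref{supre}.

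There is no real obstacle here; the only point requiring a bit of attention is the direction of the inequality in the weak-$*$ passage to the limit — one must test against the closed ball $\overline{B_t}$ (so that the semicontinuity yields a lower bound on $\sigma_t$) rather than the open ball $B_t$ (which would give the wrong direction). The density hypothesis $\Theta_*^d(x,\mu)>0$ enters only through the choice of the sequence $r_i$, and in fact it is precisely what prevents the trivial tangent measure $\sigma_t\equiv 0$.
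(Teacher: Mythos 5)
Your strategy for producing the blow-up sequence differs from the paper's: you take $r_i\to 0$ realizing $\liminf_{r\to 0}\mu(B_r(x))/(\omega_dr^d)=:\alpha$ and factor
\[
\frac{\mu(B_{tr_i}(x))}{\mu(B_{r_i}(x))}=t^d\cdot\frac{\mu(B_{tr_i}(x))/(\omega_d(tr_i)^d)}{\mu(B_{r_i}(x))/(\omega_dr_i^d)},
\]
whereas the paper first proves the \emph{limsup} ratio estimate $\limsup_{r\to0}\mu(B_{tr}(x))/\mu(B_r(x))\ge t^d$ by contraposition, iterating a hypothetical reverse inequality at scale $t_0$ to force $\Theta_*^d(x,\mu)=0$. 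Once a ratio bound along a sequence is in hand, the passage to a tangent measure by weak-$*$ compactness and the use of upper semicontinuity on the compact set $\overline{B_t}$ are the same in both arguments.

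The gap in your version is that the quotient manipulation requires $\alpha<\infty$: when $\alpha=+\infty$ — which is not excluded by the hypothesis $\Theta_*^d(x,\mu)>0$ — the fraction on the right is of indeterminate type $\infty/\infty$, and you cannot conclude that its $\liminf$ is at least $1$. Moreover, in that regime ``realizing the $\liminf$'' does not single out a sequence, since every $r_i\to 0$ sends the normalized density to $+\infty$; and one can construct Radon measures with $\Theta_*^d(x,\mu)=+\infty$ together with a sequence $r_i\to0$ along which $\mu(B_{tr_i}(x))/\mu(B_{r_i}(x))$ stays strictly below $t^d$, so nothing in your selection rule forbids a bad choice. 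This is not a harmless corner case here: Lemma~\ref{almeno} is applied inside the proof of Lemma~\ref{atomic} to $\|V_*\|$-a.e.\ $x$, \emph{before} the finiteness $\Theta^{d*}(\cdot,\|V_*\|)<+\infty$ is established (that is only obtained in Step 1 of the proof of Theorem~\ref{thm:main}, after Lemma~\ref{abscont}), so the case $\alpha=+\infty$ cannot be discarded a priori. Your argument is correct and in fact slicker than the paper's under the extra assumption $0<\Theta_*^d(x,\mu)<+\infty$, but to cover the stated generality you need the paper's iteration argument (or an equivalent device) to handle infinite lower density.
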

\begin{proof}

{\em Step 1}: We claim that for every $x\in \Omega$ such that $\Theta_*^d(x,\mu)>0$, it holds
\begin{equation}\label{sup}
\limsup_{r\to0}\frac{\mu(B_{tr}(x))}{\mu(B_{r}(x))}\geq t^d, \qquad \forall t\in (0,1).
\end{equation}
More precisely, we are going to show that 
\[
\big\{ x\in \spt \mu: \textrm{\eqref{sup} fails}\big\}\subset \big\{ x\in \spt \mu: \Theta^d_*(x,\mu )=0\big\},
\]
which  clearly implies that~\eqref{sup} holds for every $x \in \Omega$ with positive lower $d$-dimensional density. Let indeed \(x\in\spt \mu\) be such that \eqref{sup} fails, then there exist $ t_0\in (0,1)$,  $\bar \e>0$, and \(\bar r>0\) such that 
\[
 \mu(B_{t_0 r}(x))\leq(1-\bar \e)t_0^d\,\mu(B_{r}(x)) \qquad\textrm{for all \(r\le \bar r\).}
\] 
Iterating this inequality, we deduce that 
$$\mu (B_{t_0^k \bar  r}(x))\leq(1-\bar \e)^k t_0^{kd}\mu(B_{\bar r}(x))\qquad\textrm{for all \(k\in \N\)}$$
and consequently
$$\Theta^d_*(x,\mu )\le \lim_{k\to \infty} \frac{\mu (B_{ t_0 ^k \bar r}(x))}{\omega_d ( t_0^k \bar r)^d}=0.$$

\noindent

{\em Step 2}: Let now \(x\) be a point satisfying  \eqref{sup}  and let  $t\in(0,1)$:  there exists a sequence $r_j\downarrow 0$  (possibly depending on  $t$ and on \(x\)), such that
$$
t^d \leq  \limsup_{r\to 0}\frac{\mu(B_{tr}(x))}{\mu(B_{r}(x))}=\lim_{j\to\infty}\frac{\mu(B_{tr_j}(x))}{\mu(B_{r_j}(x))} = \lim_{j\to\infty}\mu_{x,r_j}(B_t)
$$
where \(\mu_{x,r_j}\) is defined as in~\eqref{blowup}. Up to extracting a (not relabelled) subsequence, 
$$\mu_{x,r_j}\weakstarto \sigma_t\in {\rm Tan}(x,\mu).$$ 
 By upper semicontinuity:
\[
\sigma_t(\overline{B_t})\ge \limsup_j\mu_{x,r_j}(\overline{B_t})\ge t^d
\]
which is~\eqref{supre}.
\end{proof}

In order to prove Theorem~\ref{thm:main} we need the following rectifiability 
criterion which is essentially~\cite[Theorem~4.5]{MaFra99}, see also~\cite[Theorem 16.7]{Mattila}. For the sake of readability, we postpone its proof to Appendix~\ref{Lemma}.

\begin{lemma}\label{rect}
Let \(\mu\in\mathcal M_+(\Omega)\) be a Radon measure such that the following two properties hold:
\begin{itemize}
\item[(i)] For \(\mu\)-a.e. \(x\in \Omega\), \(0<\Theta^d_*(x,\mu)\le \Theta^{d*}(x,\mu)<+\infty\).
\item[(ii)] For \(\mu\)-a.e. \(x\in \Omega\) there exists \(T_x \in G(n,d)\) such that every \(\sigma\in {\rm Tan}(x,\mu)\)  is translation invariant along \(T_x\), 
i.e.
\[
\int \partial_{e} \varphi \,d\sigma  =0\qquad\mbox{for every \(\varphi\in C_c^1(B)\) and every \(e\in T_x\)}.
\]
\end{itemize}
Then \(\mu\) is \(d\)-rectifiable, i.e. \(\mu=\theta \H^d\res K\) for some \(d\)-rectifiable set \(K\) and  Borel function \(\theta:\R^n\to \R_{>0}\). Furthermore \(T_x K=T_x\) for \(\mu\)-a.e. \(x\).
\end{lemma}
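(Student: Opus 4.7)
The plan is to deduce rectifiability in three stages: first reduce to the absolutely continuous case $\mu\ll\H^d$, then upgrade hypothesis (ii) to the statement that every tangent measure at $\mu$-a.e.\ point is a positive multiple of $\H^d\res T_x$, and finally invoke a classical flat-tangent rectifiability criterion.

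For the reduction, hypothesis (i) combined with the standard Besicovitch differentiation estimate gives $\mu\res\{\Theta^{d*}(\cdot,\mu)\le M\}\le 2^d M\,\H^d$, hence $\mu\ll\H^d$ and we may write $\mu=\theta\,\H^d\res K$ with $0<\theta<\infty$ $\mu$-a.e. Applying Lusin's theorem to the $\mu$-measurable plane field $x\mapsto T_x$, it then suffices to prove rectifiability of each member of a countable exhaustion by measurable subsets $E\subset K$ on which $\theta$, $\Theta^d_*$ and $\Theta^{d*}$ are all pinched in $[c,c^{-1}]$ and $x\mapsto T_x$ is continuous.

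The heart of the proof is the second stage. Fix a $\mu$-density point $x\in E$ and let $\sigma\in\mathrm{Tan}(x,\mu)$. Translation invariance along $T_x$ yields, by disintegration over $T_x\oplus T_x^\perp$, a representation
$$\sigma = (\H^d\res T_x)\otimes\nu$$
on $B$, where $\nu$ is a Radon measure on $T_x^\perp\cap B$ (finiteness follows from the inherited upper density bound). I would then show $\nu=c\,\delta_0$ by contradiction: if $\nu$ had support at some $y_0\ne 0$, then by the very definition of tangent measure there exist points $x_j=x+r_jy_0+r_jv_j\in\spt\mu$ with $r_j\downarrow 0$ and $v_j\to 0\in T_x$; the continuity of the plane field on $E$ forces $T_{x_j}\to T_x$, and a careful rescaling of $\mu$ at $x_j$ at scale $r_j$ produces a new tangent measure of $\mu$ at $x$ whose support is translated by $y_0$, contradicting both the lower-density bound supplied by Lemma~\ref{almeno} and the $T_x$-invariance. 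Hence $\sigma=c\,\H^d\res T_x$.

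Once every tangent measure is a positive flat multiple of $\H^d\res T_x$, the Marstrand--Mattila rectifiability criterion (see \cite[Theorem~16.7]{Mattila} or \cite[Theorem~4.5]{MaFra99}) yields an approximate tangent $d$-plane equal to $T_x$ at $\mu$-a.e.\ point, and therefore $K$ is $d$-rectifiable with $T_xK=T_x$. The principal obstacle is the second stage: translation invariance is a priori compatible with a tangent measure being supported on a stack of parallel $d$-planes, so ruling out this configuration genuinely requires using the $\mu$-measurability of the plane field $x\mapsto T_x$ and the positivity of the lower density, rather than pointwise properties of $\sigma$ alone.
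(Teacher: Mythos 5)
Your strategy diverges genuinely from the paper's. The paper applies Preiss's criterion (Theorem~\ref{Preiss}) directly, verifying conditions (I) and (II): condition (I) is extracted from the density pinching in hypothesis (i) via a measure-theoretic set decomposition, and condition (II) follows from hypothesis (ii) together with the elementary fact that $0\in\spt\sigma$ for every $\sigma\in\mathrm{Tan}(x,\mu)$. Crucially, Preiss's condition (II) only requires translation invariance along $T_x$ plus $0\in\spt\sigma$; it does \emph{not} require the tangent measures to be flat multiples of $\H^d\res T_x$. You instead try to upgrade hypothesis (ii) to exact flatness $\sigma=c\,\H^d\res T_x$ and then invoke a Marstrand--Mattila-type criterion that needs either flat tangent measures or an approximate-tangent-plane cone condition. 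That is a strictly stronger intermediate statement, and it is where the proof breaks down.

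The gap is concrete. From $T_x$-invariance one does obtain $\sigma=(\H^d\res T_x)\otimes\nu$, but the contradiction argument for $\nu=c\,\delta_0$ does not go through. First, the continuity of $x\mapsto T_x$ obtained from Lusin's theorem holds only on the carved-out set $E$, while the points $x_j\in\spt\mu$ witnessing a second sheet of $\sigma$ need not belong to $E$, so $T_{x_j}\to T_x$ is unjustified. Second, the phrase ``a careful rescaling of $\mu$ at $x_j$ at scale $r_j$ produces a new tangent measure of $\mu$ at $x$'' is not correct as stated: blow-ups centred at moving points $x_j$ do not yield elements of $\mathrm{Tan}(x,\mu)$, and invoking Preiss's ``tangent measures of tangent measures are tangent measures'' theorem, which is the rigorous version of this idea, still does not produce the contradiction you claim. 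Third, and most importantly, a two-sheet measure such as $\sigma=c_0\,\H^d\res T_x + c_1\,\H^d\res(T_x+y_0)$ with $y_0\perp T_x$, $0<|y_0|<1$, is $T_x$-invariant, has $0\in\spt\sigma$, satisfies the normalisation $\sigma(\overline{B_1})\le 1$, is compatible with the density pinching at the origin inherited from hypothesis (i), and is compatible with the lower bound of Lemma~\ref{almeno}. Nothing elementary in hypotheses (i)--(ii) rules it out; the flatness of tangent measures is a consequence of rectifiability, not an ingredient one can establish beforehand without Preiss-level machinery. This is exactly why the paper bypasses flatness and checks the weaker conditions (I)--(II) directly.
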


\subsection{Varifolds and integrands} We denote by $G(n,d)$ the Grassmannian of (un-oriented) $d$-dimensional 
linear subspaces in $\R^{n}$ (often referred to as $d$-planes) 
and given any set \(E\subset\R^n\) we denote by \(G(E)=E\times G(n,d)\) the (trivial) Grassmannian bundle over \(E\). 
We will often identify a \(d\)-dimensional plane \(T \in G(n,d)\) with the matrix \(T\in (\R^n\otimes\R^n)_{\rm sym}\)  representing the {\em orthogonal projection} onto \(T\).

A $d$-varifold  on \(\Omega\) is  a positive  Radon measure $V$ on $G(\Omega)$ and we will denote with  \(\mathbb V_d(\Omega)\)  the set of all \(d\)-varifolds on \(\Omega\).

Given a diffeomorphism $\psi \in C^1(\Omega,\R^n)$, we define the push-forward of $V\in\mathbb V_d(\Omega)$ with respect to $\psi$ as the varifold $\psi^\#V\in \mathbb V_d(\psi(\Omega))$ such that
$$\int_{G(\psi(\Omega))}\Phi(x,S)d(\psi^\#V)(x,S)=\int_{G(\Omega)}\Phi(\psi(x),d_x\psi(S))J\psi(x,S) dV(x,S),$$
for every $\Phi\in C^0_c(G(\psi(\Omega)))$. Here $d_x\psi(S)$ is the image of $S$ under the linear map $d_x\psi(x)$ and 
\[
J\psi(x,S):=\sqrt{\det\Big(\big(d_x\psi\big|_S\big)^*\circ d_x\psi\big|_S\Big)}
 \]
 denotes the $d$-Jacobian determinant of the differential $d_x\psi$ restricted to the $d$-plane $S$, 
 see \cite[Chapter 8]{SimonLN}. Note that the push-forward of a varifold \(V\) is {\em not} the same as the push-forward of the  Radon measure \(V\) through a map $\psi$ defined on $G(\Omega)$ (the latter being  denoted with an expressly different notation: \(\psi_\# V\)).

To a varifold \( V\in \mathbb V_d(\Omega) \), we associate the measure  $\|V\|\in \mathcal M_+(\Omega)$ defined by 
\[
\|V\|(A)=V(G(A))\qquad\textrm{for all \(A\subset \Omega\) Borel.}
\]
Hence  $\|V\|=\pi_\# V$, where \(\pi: \Omega\times G(n,d)\to \Omega\) is the projection onto the first factor and the push-forward  is intended in the sense of Radon measures. By the disintegration theorem for measures, see for instance~\cite[Theorem 2.28]{AFP}, we can write
\[
V(dx,dT)= \|V\|(dx)\otimes \mu_x(dT),
\]
where \(\mu_x\in \mathcal P(G(n,d))\) is a  (measurable) family of parametrized  non-negative measures on the Grassmannian such that  \(\mu_x(G(n,d))=1\).

A \(d\)-dimensional varifold $V\in \mathbb V_d(\Omega)$ is said $d$-rectifiable 
if there exist a $d$-rectifiable set $K$ and a  Borel  function $\theta: \R^n\to \R_{>0}$ such that $V=\theta \H^d \res (K\cap \Omega) \otimes \delta_{T_x K}$.

We will  use the notation
\[
\Theta^d_*(x,V)=\Theta^d_*(x,\|V\|)\qquad\textrm{and}\qquad\Theta^{d*}(x,V)=\Theta^{d*}(x,\|V\|)
\] 
for the upper and lower \(d\)-dimensional densities of \(\|V\|\). In case $\Theta^d_*(x,V)=\Theta^{d*}(x,V)$, we denote their common value $\Theta^d(x,V)$.

As already specified in the introduction, we will associate to any $d$-varifold $V$, its ``at most \(d\)-dimensional'' part $V_*$ defined as
\begin{equation*}\label{v*}
V_*:=V\res \{x\in \Omega : \Theta^d_*(x,V)>0\}\times G(n,d).
\end{equation*}
Note that 
\begin{equation*}\label{v*1}
\|V_*\|=\|V\|\res \{x\in \Omega : \Theta^d_*(x,V)>0\}
\end{equation*}
and thus, by the Lebesgue-Besicovitch differentiation Theorem~\cite[Theorem 2.22]{AFP}, for \(\|V_*\|\) almost every point (or equivalently for \(\|V\|\) almost every \(x\) with \( \Theta^d_*(x,V)>0\)) 
\begin{equation}\label{v*2}
\lim_{r\to 0} \frac{\|V_*\|(B_r(x))}{\|V\|(B_r(x))}=1.
\end{equation}
In particular,
\begin{equation}\label{v*3}
\Theta^d_*(x,V_*)>0\qquad\mbox{for \(\|V_*\|\)-a.e. \(x\)}.
\end{equation}

Let  \(\eta^{x,r}(y)=(y-x)/r\), as in \eqref{blowup} we define 
\begin{equation*}\label{blowupvarifolds}
V_{x,r}:=\frac{r^d}{\|V\|(B_{r}(x))}\big((\eta^{x,r})^\#V \big)\res G(B), 
\end{equation*}
where the additional factor $r^d$ is due to the presence of the $d$-Jacobian determinant of the differential $d\eta^{x,r}$ in the definition of push-forward of varifolds. Note that, with the notation of~\eqref{blowup}:
\begin{equation*}\label{blowupmisurevarifold}
\|V_{x,r}\|= \|V\|_{x,r}.
\end{equation*}
The following Lemma is based on a simple Lebesgue point argument, combined with the separability of \(C^0_c(G(\Omega))\), see  for instance \cite[Proposition 9]{DLOW}. 
 
\begin{lemma}\label{conv}
For \(\|V\|\)-almost every point  \(x\in \Omega\)  and every sequence \(r_j\to 0\) there is a subsequence \(r_{j_i}\) such that 
\begin{equation*}\label{e:conv}
V_{x,r_{j_i}}(dy,dT)=\|V\|_{x,r_{j_i}}(dy)\otimes \mu_{x+yr_{j_i}}(dT) \weakstarto \sigma(dy) \otimes \mu_{x}(dT)=:V^\infty(dy,dT)
\end{equation*}
with \(\sigma\in {\rm Tan} (x, \|V\|)\). 
\end{lemma}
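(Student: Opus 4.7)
The plan is to combine weak-$*$ compactness for the mass measures $\|V\|_{x,r}$ with a scalar Lebesgue differentiation argument applied to the disintegration $z\mapsto\mu_z$, tested through a countable dense subfamily of continuous functions on the Grassmannian. Since $G(n,d)$ is compact, $C^0(G(n,d))$ is separable; fix a countable dense family $\{\phi_k\}_{k\in\N}$ and define the bounded Borel functions
\[
g_k(z):=\int_{G(n,d)}\phi_k(T)\,d\mu_z(T),\qquad z\in\Omega.
\]
Each $g_k$ belongs to $L^\infty_{\rm loc}(\|V\|)\subset L^1_{\rm loc}(\|V\|)$, so the Lebesgue-Besicovitch differentiation theorem, applied simultaneously to every $g_k$, provides a $\|V\|$-null set $N\subset\Omega$ outside of which every $x$ is a Lebesgue point of every $g_k$; equivalently, for $x\notin N$,
\[
\lim_{r\to 0}\int_B |g_k(x+yr)-g_k(x)|\,d\|V\|_{x,r}(y)=0\qquad\forall\,k\in\N.
\]

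Next, fix such an $x$ and a sequence $r_j\downarrow 0$. Since $\|V\|_{x,r_j}(B)\le 1$, Banach-Alaoglu permits one to extract a subsequence $r_{j_i}$ along which $\|V\|_{x,r_{j_i}}\weakstarto\sigma$ for some $\sigma\in{\rm Tan}(x,\|V\|)$. For any product test function $F(y,T)=\psi(y)\phi_k(T)$ with $\psi\in C^0_c(B)$, the disintegration of $V_{x,r_{j_i}}$ gives
\[
\int F\,dV_{x,r_{j_i}}=\int_B \psi(y)\,g_k(x+yr_{j_i})\,d\|V\|_{x,r_{j_i}}(y).
\]
Splitting $g_k(x+yr_{j_i})=g_k(x)+\bigl[g_k(x+yr_{j_i})-g_k(x)\bigr]$, the leading term converges to $g_k(x)\int_B\psi\,d\sigma$ by the weak-$*$ convergence of the mass measures, whereas the remainder is dominated in absolute value by $\|\psi\|_\infty\int_B|g_k(x+yr_{j_i})-g_k(x)|\,d\|V\|_{x,r_{j_i}}(y)$ and so vanishes by the Lebesgue-point property. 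Hence
\[
\int F\,dV_{x,r_{j_i}}\longrightarrow g_k(x)\int_B\psi\,d\sigma=\int \psi(y)\phi_k(T)\,d\bigl(\sigma\otimes\mu_x\bigr)(y,T).
\]

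To conclude, finite linear combinations of product functions $\psi(y)\phi_k(T)$, with $\psi\in C^0_c(B)$ and $\phi_k$ from the countable dense family above, are uniformly dense in $C^0_c(G(B))$ by Stone-Weierstrass (here we crucially use the compactness of $G(n,d)$); combined with the uniform mass bound $V_{x,r_{j_i}}(G(B))\le 1$, this extends the convergence to every test function in $C^0_c(G(B))$, which is precisely the statement $V_{x,r_{j_i}}\weakstarto\sigma\otimes\mu_x=V^\infty$. The only genuinely delicate point is to \emph{freeze} the Grassmannian slice at $\mu_x$ while the spatial variable is being blown up; this is exactly what the Lebesgue-point property of the scalar fields $g_k$ delivers, and separability is used only to ensure that a single full-measure set of admissible points $x$ can be chosen to work simultaneously for every $k$.
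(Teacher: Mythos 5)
Your proof is correct and follows exactly the strategy that the paper sketches but does not spell out: the paper merely states that the result is ``based on a simple Lebesgue point argument, combined with the separability of $C^0_c(G(\Omega))$'' and refers to \cite[Proposition 9]{DLOW}. You fill in that sketch in the intended way, by choosing a countable dense family $\{\phi_k\}\subset C^0(G(n,d))$, applying the Lebesgue--Besicovitch differentiation theorem for the Radon measure $\|V\|$ to the disintegration moments $g_k(z)=\int\phi_k\,d\mu_z$, extracting the weak-$*$ limit $\sigma$ of the rescaled mass measures via Banach--Alaoglu, and then upgrading convergence on tensor-product test functions to all of $C^0_c(G(B))$ using the uniform mass bound together with a Stone--Weierstrass density argument.
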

We call any varifold \(V^\infty\in \mathbb V_d(B)\) arising as  in Lemma~\ref{conv} a {\em tangent varifold} to \(V\) at \(x\) and we collect them into \({\rm Tan}(x, V)\). The key point of the Lemma above is that the ``Grassmannian" part $\mu_y^\infty$ of a tangent varifold $V^\infty$ equals $\mu_x$ for every $y \in \Omega$: it therefore neither depends on the space variable \(y\), nor  on the chosen blow-up sequence $(r_j)$. 
Informally, we could write:
 \[
``\, {\rm Tan}(x, V) = {\rm Tan}(x, \|V\|) \otimes \mu_x(dT) \,".
 \]
We furthermore note that, as a consequence of~\eqref{v*2},
\begin{equation}\label{v*4}
{\rm Tan}(x, \|V_*\|)={\rm Tan}(x, \|V\|)\quad\mbox{and}\quad{\rm Tan}(x, V_*)={\rm Tan}(x, V)\quad\mbox{ \(\|V_*\|\)-a.e.}
\end{equation}

\medskip
The anisotropic integrand  that we consider is a \(C^1\) function
$$
F: G(\Omega) 
\longrightarrow \R_{>0}.
$$
Since our results are local in nature, up to restricting to a compactly contained open subset of $\Omega$, we can assume  the existence of two positive constants \(\lambda, \Lambda\) such that
\begin{equation*}\label{cost per area}
0 < \lambda \leq F(x,T) \leq \Lambda<\infty\qquad\textrm{for all \((x,T)\in  G(\Omega)\).}
\end{equation*}
Given \(x\in \Omega\), we will also consider the ``frozen'' integrand 
\begin{equation}\label{frozen}
F_x:G(n,d) \to (0,+\infty), \qquad F_x(T):= F(x,T).
\end{equation}
As in \eqref{eq:eVintro}, we define the {\em anisotropic energy} of \(V\in \mathbb V_d(\Omega)\) as 
 \begin{equation*}\label{eq:eV}
\FF(V,\Omega) := \int_{G(\Omega)} F(x,T)\, dV(x,T). 
\end{equation*}

For a vector field \(g\in C_c^1(\Omega,\R^n)\), we consider the family of functions \(\varphi_t(x)=x+tg(x)\), and we note that they are diffeomorphisms of \(\Omega\) into itself for \(t\) small enough. The {\em anisotropic first variation} is  defined as 
\[
\delta_F V(g):=\frac{d}{dt}\FF\big ( \varphi_t^{\#}V,\Omega\big)\Big|_{t=0}.
\]
It can be easily shown, see Appendix~\ref{appendixvariation}, that
 \begin{equation}\label{eq:firstvariation}
\delta_F V(g)
=\int_{G(\Omega)} \Big[\langle d_xF(x,T),g(x)\rangle+ B_F(x,T):Dg(x)  \Big] dV(x,T),
 \end{equation}
where  the matrix \(B_F(x,T)\in \R^n\otimes\R^n\) is uniquely defined by 
\begin{equation}\label{eq:B}
\begin{split}
B_F(x,T): L&:= F(x,T) (T:L)+\big\langle d_T F(x,T) ,\,T^\perp\circ L\circ T +(T^\perp\circ L\circ T)^*\big\rangle\\
&=:F(x,T) (T :L)+ C_F(x,T):L
\qquad \mbox{for all \(L\in \R^n\otimes \R^n\).} 
\end{split}
 \end{equation}  
Note that, via  the identification of a \(d\)-plane \(T\) with the orthogonal projection onto it, \(G(n,d)\) can be thought as a subset of \(\R^n\otimes \R^n\) and this gives the  natural identification:
 \begin{multline*}
 {\rm Tan}_T G(n,d) =\big\{ S \in \R^n\otimes\R^n: S^*=S,\quad T\circ S\circ T=0,\quad T^\perp\circ S\circ T^\perp=0\big\},
 \end{multline*}
 see Appendix~\ref{appendixvariation} for more details. 
 We are going to use the  following properties of  \(B_F(x,T)\)  and \(C_F(x,T)\), 
 which immediately follow from~\eqref{eq:B}:
 \begin{gather} 
 |B_F(x,T)-B_F(x,S)|\le C(d,n, \|F\|_{C^1}) \big( |S-T|+\omega(|S-T|)\big)
  \label{prop:B1}
 \\ 
 C_F(x,T): v\otimes w=0 \quad\textrm{for all \(v,w\in T\)}. \label{prop:B2}
 \end{gather}
 where \(\omega\) is the modulus of continuity of \(T\mapsto d_TF(x,T)\) (i.e.: a concave increasing function with \(\omega(0^+)=0\)). We also note that trivially
\begin{equation}\label{operatore}
|\delta_F V(g)|\leq  \|F\|_{C^1(\spt g)}\|g\|_{C^1}\|V\|(\spt(g)),
\end{equation}
and that, if \(F_x\) is the frozen integrand \eqref{frozen}, then \eqref{eq:firstvariation} reduces to
 \begin{equation*}\label{eq:firstvariationfroze}
\delta_{F_x} V(g)=\int_{G(\Omega)}  B_F(x,T):Dg(y)\, dV(y,T).
 \end{equation*}
 We say that a varifold \(V\) has {\em locally bounded anisotropic first variation} if \(\delta_F V\) is a Radon measure on \(\Omega\), i.e. if 
 \[
 |\delta_F V(g)|\le C(K) \|g\|_{\infty}\quad\textrm{for all \(g\in C^1_c(\Omega,\R^n)\) with \(\spt g \subset K\cc \Omega\).}
 \]
 Furthermore, we will say that \(V\) is {\em \(\FF\)-stationary} if \(\delta_FV =0\).
 
 We conclude this section with the following simple result which shows that every tangent varifold 
 to a varifold having locally bounded anisotropic first variation is \(\FF_x\)-stationary.
  
\begin{lemma}\label{stationary}
Let \(V\in \mathbb V_d(\Omega)\) be a \(d\)-dimensional varifold with locally bounded anisotropic first variation. Then, for \(\|V\|\)-almost every point, every \(W\in {\rm Tan} (x, V)\) is \(\FF_x\)-stationary, i.e. \(\delta_{F_x}W =0\). 
Moreover, if \(W( dy, dT)=\sigma(dy)\otimes \mu_x(dT)\) for some \(\sigma\in {\rm Tan} (x,\|V\|)\) (which by Lemma~\ref{conv} happens \(\|V\|\)-a.e. \(x\)), then 
\begin{equation}\label{talk}
\partial_e \sigma=0 \qquad \textrm{for all  \(e\in T_x:=\Im A_x(\mu_x)^*\)}
 \end{equation}
 in the sense of distributions, where  \(A_x(\mu_x)\) is defined in~\eqref{eq:A}.
\end{lemma}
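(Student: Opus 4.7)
The natural strategy is to show that at a blow-up scale $r\to 0$, the rescaled first variation $\delta_{F_x}V_{x,r}$ tends to zero for every test vector field, and then to pass to the limit along a sequence achieving $V_{x,r_{j_i}}\weakstarto W$. The two obstructions to overcome are: (a) $\delta_F V$ involves the full integrand $F$, not the frozen integrand $F_x$, so we must absorb the position-dependence; and (b) the ``zero-order'' piece $\langle d_xF,g\rangle$ of $\delta_F V$ must be shown to disappear in the limit. Both are cured by scaling.

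More precisely, for $g\in C_c^1(B,\R^n)$, I would introduce the scaled field
\[
g_r(y):=r\,g\!\left(\tfrac{y-x}{r}\right),\qquad Dg_r(y)=Dg\!\left(\tfrac{y-x}{r}\right),\qquad \spt g_r\subset B_r(x),
\]
and use the definition of $V_{x,r}$ to get the identity
\[
\delta_{F_x}V_{x,r}(g)=\frac{1}{\|V\|(B_r(x))}\int_{G(B_r(x))} B_F(x,T):Dg_r(y)\,dV(y,T).
\]
Adding and subtracting $B_F(y,T)$ and the zero-order term $\langle d_xF(y,T),g_r(y)\rangle$, I rewrite the right-hand side as
\[
\frac{1}{\|V\|(B_r(x))}\Bigl[\delta_F V(g_r)-\!\!\int\!\!\langle d_xF(y,T),g_r(y)\rangle\,dV+\!\!\int\!\![B_F(x,T)-B_F(y,T)]\!:\!Dg_r\,dV\Bigr].
\]
Now $\|g_r\|_\infty\le r\|g\|_\infty$, so the first term is bounded by $r\|g\|_\infty \,|\delta_F V|(B_r(x))/\|V\|(B_r(x))$, the second by $r\|g\|_\infty\|d_xF\|_\infty$, and the third by $\sup_{y\in B_r(x),T}|B_F(x,T)-B_F(y,T)|\cdot \|Dg\|_\infty$, which goes to zero as $r\to 0$ by \eqref{prop:B1} and the continuity of $F$ in $x$. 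The only delicate point is the first term, which is handled at $\|V\|$-a.e.\ $x$: since $\delta_F V$ is a Radon measure, Radon–Nikodym/Lebesgue–Besicovitch gives $\limsup_{r\to 0}|\delta_F V|(B_r(x))/\|V\|(B_r(x))<\infty$ for $\|V\|$-a.e.\ $x$, so the whole first term is $O(r)$. Hence $\delta_{F_x}V_{x,r}(g)\to 0$. Since the test function $(y,T)\mapsto B_F(x,T):Dg(y)$ is continuous and compactly supported on $G(B)$, weak-$*$ convergence $V_{x,r_{j_i}}\weakstarto W$ yields $\delta_{F_x}W(g)=0$ for every $g\in C_c^1(B,\R^n)$, i.e.\ $W$ is $\FF_x$-stationary.

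For \eqref{talk}, I would substitute the product disintegration $W=\sigma\otimes\mu_x$ and use Fubini to rewrite $0=\delta_{F_x}W(g)$ as
\[
0=\int_B\Bigl(\int_{G(n,d)}B_F(x,T)\,d\mu_x(T)\Bigr):Dg(y)\,d\sigma(y)=\int_B A_x(\mu_x):Dg(y)\,d\sigma(y).
\]
Testing with $g=\varphi e$ for $\varphi\in C_c^1(B)$ and $e\in\R^n$ gives $Dg=e\otimes\nabla\varphi$ and $A_x(\mu_x):(e\otimes\nabla\varphi)=(A_x(\mu_x)^{\!*}e)\cdot\nabla\varphi$, hence
\[
\int_B (A_x(\mu_x)^{\!*}e)\cdot\nabla\varphi\,d\sigma=0\qquad\forall\,\varphi\in C_c^1(B),\ \forall\,e\in\R^n,
\]
which is exactly $\partial_w\sigma=0$ as a distribution for every $w\in\Im A_x(\mu_x)^{\!*}=T_x$. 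The main difficulty is simply the bookkeeping in the first paragraph — verifying that each of the three remainder pieces in the rescaled first variation is quantitatively $o(1)$ at a $\|V\|$-generic point — since once $\delta_{F_x}V_{x,r}(g)\to 0$ is established, both claims of the lemma follow by elementary manipulations.
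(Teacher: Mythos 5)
Your proof is correct and takes essentially the same route as the paper: rescale the test vector field by a factor $r$, split $\delta_{F_x}V_{x,r}(g)$ into the first variation $\delta_F V$ of the full integrand (controlled by the Lebesgue--Besicovitch density of $|\delta_F V|$ against $\|V\|$) plus the error terms coming from $F_x-F$ (controlled by the $C^1$-continuity of $F$ in $x$), show that each piece is $o(1)$, and pass to the weak-$*$ limit; the derivation of \eqref{talk} via the disintegration $W=\sigma\otimes\mu_x$ is also identical. The only cosmetic difference is that you unpack the remainder into three explicit integrals, whereas the paper writes the same decomposition compactly as $\delta_F V(g_i)+\delta_{(F_x-F)}V(g_i)$; note also that the smallness of the third term really follows from the uniform continuity of $B_F$ in the $x$-variable on compacts (a consequence of $F\in C^1$), not from \eqref{prop:B1}, which governs the $T$-dependence.
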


\begin{proof}Let \(x\) be a point such that the conclusion of Lemma~\ref{conv} holds true and such that
\begin{equation}\label{locallybounded}
\limsup_{r \to 0} \frac{| \delta_F V |(B_{r}(x))}{\|V\|(B_{r}(x))}= C_x <+\infty.
\end{equation}
Note that, by Lemma~\ref{conv} and Lebesgue-Besicovitch differentiation Theorem,~\cite[Theorem 4.7]{SimonLN},  this is the case for \(\|V\|\)-almost every point.
We are going to prove the Lemma at every such a point. 

Let \(r_i\) be a sequence such that \(V_{x,{r_i}}(dy,dT)\weakstarto W(dy,dT)=\sigma(dy)\otimes \mu_{x} (dT)\), \(\sigma\in {\rm Tan}(x, \|V\|)\). For $g\in C_c^1(B,\R^n)$, we define $g_i:=g\circ \eta^{x,r_i} \in C_c^1(B_{r_i}(x),\R^n)$ and we compute
\begin{equation*}
\begin{split}
\delta_{F_x} V_{x,r_i}(g)&=\int_{G(B)} B_F(x,T):Dg(y) \, d V_{x,r_i}(y,T)\\
&=\frac{r_i^d}{\|V\|(B_{r_i}(x))}\int_{G(B_{r_i}(x))} \!\!\!B_F(x,T):Dg\left (\eta^{x,r_i}(z)\right )J\eta^{x,r_i}(z,T) \, d V(z,T)\\
&=\frac{r_i}{\|V\|(B_{r_i}(x))}\int_{G(B_{r_i}(x))} B_F(x,T):Dg_i(z) \, d V(z,T)\\
 &=r_i\frac{\delta_{F_x} V(g_i)}{\|V\|(B_{r_i}(x))} =r_i\frac{\delta_{F} V(g_i)+\delta_{(F_x-F)} V(g_i)}{\|V\|(B_{r_i}(x))}.
\end{split}
\end{equation*}
Combining \eqref{operatore}, \eqref{locallybounded} and since $r_i\|D g_i\|_{C^0}= \|D g\|_{C^0}$, we get
\begin{equation*}\label{variazioneblowup}
\begin{split}
\left |\delta_{F_x} V_{x,r_i}(g)\right| &\leq r_i\frac{|\delta_F V|(B_{r_i}(x)) \|g\|_{\infty}}{\|V\|(B_{r_i}(x))}\\
&\quad+r_i\frac{\|F-F_x\|_{C^1(B_{r_i}(x))}\|g_i\|_{C^1}\|V\|(B_{r_i}(x))}{\|V\|(B_{r_i}(x))}\\
&\leq  r_i C_x \|g\|_\infty +o_{r_i}(1)\|g\|_{C^1}
\rightarrow 0,
\end{split}
\end{equation*}
which implies \(\delta_{F_x}W =0\). Hence, recalling the Definition~\eqref{eq:A} of \(A_x(\mu)\),
 for every $g \in  C_c^1(B,\R^n)$: 
$$
 0= \delta_{F_x} W(g) = \int_{B} A_x(\mu_x) : Dg(y)  \, d\sigma (y).
 $$
 Therefore  \(A_x(\mu_x)D \sigma=0\) in the sense of distributions, which is equivalent to~\eqref{talk}, since \( \Ker A_x(\mu_x)=(\Im A_x(\mu_x)^*)^\perp\).
\end{proof}

\section{Intermediate lemmata}
\noindent To prove the sufficiency part of Theorem~\ref{thm:main}, there are two key steps:
\begin{itemize}
\item[(i)] Show that the ``Grassmannian'' part of the varifold $V_*$ is concentrated on a single plane;
\item[(ii)] Show that \(\|V_*\|\ll \H^d\).
\end{itemize}
In this section we prove these steps, in Lemma~\ref{atomic} and Lemma~\ref{abscont} respectively. 

\begin{lemma}\label{atomic}
Let \(F\) be an integrand satisfying condition \((AC)\) at every \(x\) in \(\Omega\) and let \(V\in \mathscr V_F(\Omega)\), see~\eqref{eq:V}. Then, for $\|V_*\|$-a.e. $x\in \Omega$,  $\mu_x=\delta_{T_0}$  for some \(T_0\in G(n,d)\).
\end{lemma}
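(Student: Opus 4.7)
The plan is to combine Lemmas~\ref{stationary}, \ref{conv}, and~\ref{almeno} with the atomic condition to pin down $\dim \Im A_x(\mu_x)^* = d$ and then invoke $(AC)$(ii) directly. First I would fix a point $x$ which is good for Lemmas~\ref{conv} and~\ref{stationary} and at which $\Theta^d_*(x,\|V_*\|)>0$: by \eqref{v*3}--\eqref{v*4} this covers $\|V_*\|$-a.e.\ $x$. Set $T_x := \Im A_x(\mu_x)^*$ and $k := \dim T_x$. Lemmas~\ref{stationary} and~\ref{conv} together say that every $\sigma \in {\rm Tan}(x,\|V\|)$ is translation invariant along the fixed $k$-plane $T_x$. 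Part (i) of $(AC)$ forces $n - k = \dim \Ker A_x(\mu_x) \le n - d$, so $k \ge d$; if I can exclude $k > d$, part (ii) immediately yields $\mu_x = \delta_{T_0}$ and we are done.

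Arguing by contradiction, assume $k > d$. For each $t \in (0,1)$, Lemma~\ref{almeno} produces $\sigma_t \in {\rm Tan}(x,\|V\|)$ with $\sigma_t(\overline{B_t}) \ge t^d$; by the previous step $\sigma_t$ is translation invariant along $T_x$, and weak-$*$ lower semicontinuity on open sets together with the normalization $\|V\|_{x,r}(B) = 1$ gives $\sigma_t(B) \le 1$. Since $B$ is convex, the distributional identity $\partial_e \sigma_t = 0$ for $e \in T_x$ forces a product decomposition $\sigma_t = \H^k \res T_x \otimes \nu_t$, with $\nu_t$ a positive Radon measure on $T_x^\perp \cap B$. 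Slicing along $T_x$-fibers (the fiber of $\overline{B_t}$ over $m \in T_x^\perp$ with $|m| \le t$ is a closed $k$-ball of radius $\sqrt{t^2 - |m|^2}$, and analogously for $B$),
\[
\sigma_t(\overline{B_t}) = \omega_k \int_{|m|\le t} (t^2 - |m|^2)^{k/2}\, d\nu_t(m), \qquad \sigma_t(B) = \omega_k \int_{|m|<1} (1 - |m|^2)^{k/2}\, d\nu_t(m),
\]
with integrals on $T_x^\perp$. Estimating $(t^2 - |m|^2)^{k/2} \le t^k$ in the first and $(1-|m|^2)^{k/2} \ge (1-t^2)^{k/2}$ on $\{|m|\le t\}$ in the second, together with $\sigma_t(B) \le 1$, I obtain
\[
t^d \le \sigma_t(\overline{B_t}) \le \frac{t^k}{(1-t^2)^{k/2}}.
\]
Rearranged this reads $(1-t^2)^{k/2} \le t^{k-d}$, which fails for $t$ sufficiently small (the left side tends to $1$, the right to $0$, since $k > d$). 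This contradicts the assumption $k > d$, so $k = d$ and $(AC)$(ii) concludes.

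The main subtlety is the disintegration step: the distributional translation invariance $\partial_e \sigma_t = 0$ on the open ball must be promoted to a genuine product structure $\sigma_t = \H^k \res T_x \otimes \nu_t$ valid up to $\partial B$, so that the slice formulas apply on both $\overline{B_t}$ and $B$. This is a standard fact for Radon measures on the convex open set $B$ (locally, $T_x$-invariance forces a product with $\H^k$ on each $T_x$-fiber, and the global decomposition on $B$ follows by a routine Fubini/partition-of-unity argument), but it is the one step that deserves genuine care; everything else is bookkeeping. A small additional point worth checking is the measurability of the map $x \mapsto T_0(x)$ produced by $(AC)$(ii), which is standard once the disintegration $\mu_x$ itself is measurable.
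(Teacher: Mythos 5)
Your proof is correct and follows the same conceptual strategy as the paper: combine the density lower bound from Lemma~\ref{almeno} with the translation invariance from Lemmas~\ref{conv}--\ref{stationary} and the atomic condition to show that any tangent measure $\sigma_t$ invariant in more than $d$ directions would have to violate $\sigma_t(\overline{B_t})\ge t^d$. The one place where you diverge technically is the disintegration step: the paper fixes a $(d+1)$-plane $Z\subset T_x$ and restricts $\sigma$ to the product cylinder $B^{d+1}_{1/\sqrt 2}\times B^{n-d-1}_{1/\sqrt 2}\subset B$, where $\sigma = c\,\H^{d+1}\res Z\otimes\gamma$ holds by an immediate application of the constancy theorem on a rectangle, and the bound $\sigma(\overline{B_t})\le C(d)\,t^{d+1}$ follows at once. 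You instead keep all $k=\dim T_x$ directions and slice the full ball $B$, which gives the sharper estimate $\sigma_t(\overline{B_t})\le t^k(1-t^2)^{-k/2}$ but requires (as you rightly flag) the additional observation that the local constancy/disintegration extends to a global product formula on the non-product domain $B$; convexity of $B$ (connected fibers) makes this work, but the paper's choice of a cylinder sidesteps the issue entirely. Both give the desired contradiction for $t$ small, and your measurability remark at the end, while not needed for the statement of this lemma, is indeed something the later rectifiability argument implicitly uses.
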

\begin{proof}
Let \(t\le t(d)\ll1\) to be fixed later. By  Lemmata \ref{conv}, \ref{stationary} and \ref{almeno} and by~\eqref{v*4}, for $\|V_*\|$-a.e. $x$ there exist a sequence $r_i\to 0$ and a tangent measure $\sigma$ such that 
$$
\|V_*\|_{x,r_i} \weakstarto \sigma, \qquad  
(V_*)_{x,r_{i}} \weakstarto \sigma \otimes \mu_x,  \qquad 
\sigma (\overline{B_t})\geq t^d
$$
and
$$\partial_e \sigma=0 \quad \textrm{for all \(e\in T_x=\Im A_x(\mu_x)^*\)}.$$

Let us  now show that if \(t(d)\) is sufficiently small, then \(\mu_x=\delta_{T_0}\).  
Assume by contradiction that $\mu_x$ is not a Dirac delta: from the \((AC)\) condition of $F$, 
this implies  that $\dim \Ker A_x(\mu_x)^*< n-d$ and consequently that $\dim( T_x )>d$. 
This means that $\sigma$ is invariant by translation along at least  $d+1$ directions 
and therefore there exists $Z\in G(n,d+1)$, a probability measure $\gamma\in \mathcal P(Z^\perp)$ defined in the 
linear space $Z^\perp$ and supported in $B^{n-d-1}_{1/\sqrt{2}}$, and a constant $c\in\R$, such that we can decompose \(\sigma\) in 
 the cylinder $B^{d+1}_{1/\sqrt{2}}\times B^{n-d-1}_{1/\sqrt{2}} \subset Z\times Z^\perp$ as 
 $$
 \sigma\res B^{d+1}_{1/\sqrt{2}}\times B^{n-d-1}_{1/\sqrt{2}}=c\H^{d+1}\res (Z\cap B^{d+1}_{1/\sqrt{2}}) \otimes \gamma,
 $$
  where $c\leq2^{(d+1)/2}\omega_{d+1}^{-1}$ since  $\sigma(B_1)\leq1$. Taking $t(d)<\frac1{2\sqrt{2}}$, the ball $\overline{B_t}$ is contained in the cylinder $B^{d+1}_{1/\sqrt{2}}\times B^{n-d-1}_{1/\sqrt{2}}$ and hence 
$$t^d\leq \sigma(\overline{B_t})\leq \sigma( B_t^{d+1}\times B^{n-d-1}_{1/\sqrt{2}})\leq C(d) t^{d+1}, $$ 
which is a contradiction if $t(d)\ll 1$. 
\end{proof}

The next Lemma is inspired by the ``Strong Constancy Lemma'' of Allard~\cite[Theorem 4]{Allard84BOOK}, see also~\cite{DePRin16}.

\begin{lemma}\label{t:integrality}
Let \(F_j:G(B)\to \R_{>0}\) be a sequence of \(C^1\) integrands and let   
$V_j\in \mathbb V_d(G(B))$ be a sequence of $d$-varifolds equi-compactly supported in $B$
 (i.e.  such that \(\spt \|V_j\|\subset K \subset\subset B\)) with $\|V_j\|(B)\leq 1$.  
If there exist $N>0$ and  $S\in G(n,d)$ such that
\begin{itemize}
\item [(1)] $|\delta_{F_{j}} V_j |(B) +\|F_j\|_{C^1(G(B))}\leq N$,
\item [(2)] \(|B_{F_j}(x,T)-B_{F_j}(x,S)|\le \omega(|S-T|)\) for some modulus of continuity independent on \(j\),
\item [(3)] $\delta_j:=\int_{G(B)} |T-S|  dV_j(z,T) \rightarrow 0$ as \(j\to \infty\),
\end{itemize}
then, up to subsequences, there exists $\gamma \in L^1(B^d, \H^d\res B^d)$ such that for every $0<t<1$ 
\begin{equation}\label{eq:limiteforte}
\Big| (\Pi_S)_{\#}\big(F_j(z, S)\|V_j\|\big) - \gamma\mathcal \H^d\res B^d\Big|(B^d_t ) \longrightarrow 0, 
\end{equation}
where \(\Pi_S:\R^n\to S\) denotes   the orthogonal projection onto \(S\) (which in this Lemma we do not identify with \(S\)).
\end{lemma}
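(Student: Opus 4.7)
My plan is to emulate Allard's Strong Constancy strategy by reducing the statement to a BV-type compactness argument for the projected positive Radon measures
\[
\mu_j := (\Pi_S)_\#\bigl(F_j(\cdot, S)\,\|V_j\|\bigr) \in \mathcal M_+(B^d).
\]
By (1), $\mu_j(B^d)\le N$ and $\mathrm{spt}\,\mu_j \subset \Pi_S(K)\Subset B^d$, so up to subsequences $\mu_j \weakstarto \mu$ narrowly. The goal is to identify $\mu=\gamma\H^d\res B^d$ for some $\gamma\in L^1$ and to upgrade the weak-$*$ convergence to convergence in total variation on every $B^d_t$, $t<1$.

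The core computation plugs the test field $g(z):=\chi(z)\,\varphi(\Pi_S z)\,e$ into the first-variation formula~\eqref{eq:firstvariation}, where $\varphi\in C_c^1(B^d)$, $e\in S$, and $\chi\in C_c^\infty(B)$ equals $1$ on the common compact set $K\supset \mathrm{spt}\,\|V_j\|$. A chain-rule computation gives $Dg(z)=e\otimes\nabla\varphi(\Pi_S z)$ on $\mathrm{spt}\,\|V_j\|$; combining this with the identity $S:(e\otimes v)=e\cdot v$ for $v\in S$ (since $Sv=v$) and the vanishing property~\eqref{prop:B2}, we obtain $B_{F_j}(z,S):Dg(z)=F_j(z,S)\,e\cdot \nabla\varphi(\Pi_S z)$. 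Splitting $B_{F_j}(z,T)=B_{F_j}(z,S)+[B_{F_j}(z,T)-B_{F_j}(z,S)]$ and rearranging yields
\[
\int_{B^d}\! e\cdot\nabla\varphi\, d\mu_j = \delta_{F_j}V_j(g)-\int\!\langle d_x F_j,e\rangle\,\varphi(\Pi_S z)\,dV_j-\mathcal R_j(\varphi,e),
\]
with $\mathcal R_j(\varphi,e):=\int [B_{F_j}(z,T)-B_{F_j}(z,S)]:(e\otimes \nabla\varphi(\Pi_S z))\,dV_j$. By (1), the first two terms are bounded by $2N\|\varphi\|_\infty$; by (2)-(3), a Chebyshev truncation at $\eta=\sqrt{\delta_j}$ using the concavity of $\omega$ gives $\epsilon_j:=\int \omega(|T-S|)\,dV_j\to 0$, so $|\mathcal R_j(\varphi,e)|\le \epsilon_j\|\nabla\varphi\|_\infty$. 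This yields the near-BV estimate
\begin{equation}\label{plan:est}
\Bigl|\int_{B^d}\partial_e\varphi\,d\mu_j\Bigr|\le 2N\|\varphi\|_\infty+\epsilon_j\|\nabla\varphi\|_\infty \qquad \forall\,\varphi\in C_c^1(B^d),\ e\in S,\ |e|\le 1.
\end{equation}

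From \eqref{plan:est} I would extract $W^{1,1}$-compactness through mollification. Setting $\mu_j^\delta:=\mu_j*\rho_\delta$ and testing \eqref{plan:est} against $\varphi=\psi*\rho_\delta$ with $\psi\in C_c^0(B^d_t,\R^d)$, $\|\psi\|_\infty\le 1$, gives $\|\nabla\mu_j^\delta\|_{L^1(B^d_t)}\le C(N)+C_\rho\,\epsilon_j/\delta$. Choosing $\delta_j\to 0$ with $\epsilon_j/\delta_j\to 0$ (say $\delta_j=\sqrt{\epsilon_j}$), the sequence $(\mu_j^{\delta_j})$ is uniformly bounded in $W^{1,1}(B^d_t)$ for every $t<1$. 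Rellich-Kondrachov then yields, up to a further subsequence, $\mu_j^{\delta_j}\to \gamma$ strongly in $L^1(B^d_t)$ with $\gamma\in L^1$, and passing to the limit identifies the weak-$*$ limit as $\mu=\gamma\H^d\res B^d$.

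The hard part is upgrading this $L^1$-convergence of the mollifications to the TV-convergence of $\mu_j$ itself on $B^d_t$, a nontrivial step since the $\mu_j$ need not be absolutely continuous. The key is that the error term $\mathcal R_j$ admits a \emph{divergence-form} representation: disintegrating $V_j=\|V_j\|\otimes\mu^j_z$ gives $\mathcal R_j(\varphi,e)=\int \nabla\varphi\cdot d\tau_{j,e}$, where
\[
\tau_{j,e}:=(\Pi_S)_\#\!\Bigl[\Bigl(\int [B_{F_j}(z,T)-B_{F_j}(z,S)]^* e\,d\mu^j_z(T)\Bigr)\|V_j\|\Bigr]\in \mathcal M(B^d,\R^d),\qquad |\tau_{j,e}|(B^d)\le \epsilon_j\to 0.
\]
Thus $\partial_e\mu_j$ decomposes as a signed measure of bounded total variation plus $\nabla\cdot \tau_{j,e}$ with $\tau_{j,e}\to 0$ in TV. Combined with the positivity of $\mu_j$ and the narrow convergence to the absolutely continuous limit $\gamma\H^d$, a concentration-compactness argument exploiting this decomposition rules out mass concentration in the limit and delivers $|\mu_j - \gamma\H^d\res B^d|(B^d_t)\to 0$.
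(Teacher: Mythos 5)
Your setup and the core computation coincide with the paper's: the same test field $g(z)=\chi(z)\,\varphi(\Pi_S z)\,e$, the same use of~\eqref{prop:B2} to isolate $F_j(z,S)\,e\cdot\nabla\varphi(\Pi_S z)$, and the same structural conclusion that $\partial_e\mu_j$ splits into a uniformly bounded signed measure (coming from $\delta_{F_j}V_j$ and $d_zF_j$) plus the divergence of a vector measure $\tau_{j,e}$ whose mass is $O\big(\int\omega(|T-S|)\,dV_j\big)\to0$. Up to cosmetic rearrangement this is exactly the paper's identity $D'u_j=\Div' X_j + f_j + g_j$, so the reduction is sound and the Chebyshev truncation for $\int\omega(|T-S|)\,dV_j\to0$ is a correct (and standard) observation.

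Where the proposal departs from the paper is in how this decomposition is exploited, and this is where a genuine gap opens. You obtain $L^1_{\rm loc}$-precompactness of the mollified sequence $\mu_j * \rho_{\delta_j}$ by establishing $\|\nabla(\mu_j*\rho_\delta)\|_{L^1}\le C(N)+C\,\epsilon_j/\delta$ and invoking Rellich with $\delta_j=\sqrt{\epsilon_j}$. That argument is correct as far as it goes, and is more elementary than the paper's route (Laplacian inversion, Calder\'on--Zygmund estimate, Fr\'echet--Kolmogorov, and the positivity/balance Lemma~\ref{l:balance}). But it gives strictly less: your $W^{1,1}$ bound degenerates as $\delta\downarrow 0$ with $j$ fixed, so it only yields precompactness of $\mu_j*\rho_{\delta_j}$ for mollification scales decaying \emph{slowly} relative to $\epsilon_j$. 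The paper's bounds, by contrast, are uniform in the mollification parameter: $\|X_j*\psi_{\e}\|_{L^1}\le |X_j|(B^d_1)$, $\|(f_j+g_j)*\psi_\e\|_{L^1}\le |f_j+g_j|(B^d_1)$, hence the weak-$L^1$ bound on $K*Y_j$ and the Fr\'echet--Kolmogorov compactness of $G*h_j$ hold for \emph{every} sequence $\e_j\downarrow 0$. This ``all scales'' feature is not a luxury: the claim~\eqref{eq:limiteforte} is a total-variation statement, the measures $\mu_j$ need not be absolutely continuous, and any singular mass of $\mu_j$ living at scales much finer than $\sqrt{\epsilon_j}$ is invisible after mollifying at scale $\sqrt{\epsilon_j}$. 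To rule such mass out one must be allowed to take the mollification scale arbitrarily small while retaining $L^1_{\rm loc}$-precompactness, and then use the positivity of $\mu_j$ to absorb the weak-$L^1$ piece---this is precisely the role of the Calder\'on--Zygmund $L^{1,\infty}$ bound together with Lemma~\ref{l:balance} in the paper. Your closing paragraph gestures at this (``concentration-compactness argument exploiting this decomposition rules out mass concentration'') but supplies no mechanism, and the single estimate you do have is incapable of detecting fine-scale singular parts. Note also that attempting to run the potential-theoretic argument directly on $\mu_j$ (rather than on its mollifications) is problematic: $K*\tau_{j,e}$ for $\tau_{j,e}$ a measure is only a distribution of order one and is not a priori an $L^1_{\rm loc}$ function, so the balance-type trick cannot be applied without mollifying first. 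As written, the upgrade from $L^1$-convergence of $\mu_j*\rho_{\delta_j}$ to $|\mu_j-\gamma\H^d|(B^d_t)\to 0$ remains unproved, and this is the heart of the lemma.
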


\begin{proof}
To simplify the notation let us simply set \(\Pi=\Pi_S\); we will also denote  with a prime
 the variables in the \(d\)-plane \(S\) so that \(x'=\Pi(x)\). 
 Let \(u_j=\Pi_\# \big(F_j(z, S)\|V_j\|\big) \in\mathcal M_+(B^d)\): then
\[
\langle u_j, \varphi\rangle= \int_{G(B)} \varphi (\Pi(z)) F_j(z,S)d V_j(z,T)\qquad\textrm{for all \(\varphi\in C_c^0(B^d)\).}
\]
Let  \(e\in S\) and, for \(\varphi\in C_c^1(B^d)\), let us denote by \(D'\) the gradient of 
\(\varphi\) with respect to the variables in \(S\), so that \(\Pi^*(D'\varphi)(\Pi(z))=D (\varphi(\Pi(z)))\). We then have in the sense of distributions
\begin{equation}\label{talk0}
\begin{split}
-\langle \partial'_e u_j, \varphi\rangle&=\langle u_j, \partial'_e \varphi \rangle=\int_{G(B)} \langle D' \varphi(\Pi(z)),e\rangle F_j(z,S)\, dV_j(z,T)
\\
&=\int_{G(B)} \langle D' \varphi(\Pi(z)),e\rangle (F_j(z,S)-F_j(z,T)) \,dV_j(z,T)\\
&\quad+\int_{G(B)}  F_j(z,T)(S-T)  :e\otimes \Pi^*(D'\varphi)(\Pi(z))  \, dV_j(z,T)\\
&\quad+\int_{G(B)} \big(C_{F_j}(z,S)-C_{F_j}(z,T)\big):e\otimes\Pi^*(D'\varphi)(\Pi(z)) \, dV_j(z,T)\\
&\quad-\int_{G(B)}  \big\langle d_zF_j(z,T),e\,\varphi(\Pi(z))\big\rangle \,d V_j (z,T)\\
&\quad+\int_{G(B)} \big\langle d_zF_j(z,T),e\,\varphi(\Pi(z))\big\rangle \,dV_j(z,T)\\
&\quad+  \int_{G(B)}\big(F_j(z,T)T+C_{F_j}(z,T)\big):e\otimes D (\varphi(\Pi(z)))\,dV_j(z,T),
\end{split}
\end{equation}
where  we have used that 
\[
\Id:e\otimes \Pi^*(D'\varphi)(\Pi(z)) = S: e\otimes \Pi^*(D'\varphi)(\Pi(z)) =\langle D' \varphi(\Pi(z)),e\rangle
\] 
and \(C_{F_j}(z,S):e\otimes \Pi^*(D'\varphi)(\Pi(z))=0\), since  \(D'\varphi\) and \(e\) belong to \(S\), see~\eqref{prop:B2}.  Let us define the distributions
\[
\begin{split}
\langle X^e_j,\psi \rangle&:=
 \int_{G(B)} \Big((F_j(z, S)-F_j(z,T))\Id+F_j(z,T)(S-T)\\
 &\qquad\qquad+(C_{F_j}(z,S)-C_{F_j}(z,T))\Big):e\otimes \Pi^* \psi(\Pi(z))\,dV_j(z,T)
\end{split}
\]
and
\[
\begin{split}
\langle f^e_j,\varphi \rangle &:=\int_{G(B)} \big\langle d_zF_j(z,T),e\,\varphi(\Pi(z))\big\rangle\, dV_j (z,T),\\
\langle g^e_j,\varphi \rangle&:=-\int_{G(B)}  \Big( \big\langle d_zF_j(z,T),e\,\varphi(\Pi(z))\big\rangle \\
&\qquad\qquad+ \big(F_j(z,T)T+C_{F_j}(z,T)\big):e\otimes \Pi^* D' \varphi(\Pi(z))\Big)\,dV_j(z,T)
\\
&\,=-\delta_{F_j} V_j(e\,\varphi\circ \Pi).
\end{split}
\]
By their very definition, \(X^e_j\) are vector valued Radon measures in \(\mathcal M(B_1^d,\R^d)\) and, by the uniform bound on the \(C^1\) norm of the \(F_j\),~\eqref{prop:B1} and assumptions (2) and (3):
\begin{equation}\label{talk1}
\sup_{|e|=1} |X^e_j |(B^d_1)\to 0\qquad\mbox{as \(j\to \infty\)}.
\end{equation}
Moreover, by the mass bound \(\|V_j\|(B)\le 1\) and assumption (1), \(f_j^e\) and \(g^e_j\) are  also Radon measures satisfying
\begin{equation}\label{talk2}
\sup_j\sup_{|e|=1}|f^e_j|(B^d_1)+|g^e_j|(B^d_1)<+\infty.
\end{equation}
Letting \(e\) vary in an orthonormal base  \(\{e_1,\dots, e_d\}\) of \(S\), we can re-write~\eqref{talk0} as
\begin{equation}\label{talk1/2}
D'  u_j=\Div' X_j+f_j+g_j,
\end{equation}
where \(X_j=(X^{e_1}_j,\dots,X^{e_d}_j)\in \R^d\otimes \R^d\),  \(f_j=(f_j^{e_1},\dots,f_j^{e_d})\) and \(g_j=(g_j^{e_1},\dots,g_j^{e_d})\). 

Let us now choose an arbitrary sequence \(\e_j\downarrow 0\) and a family of smooth approximation of the identity \(\psi_{\e_j}(x')=\e_j^{-d}\psi(x'/\e_j)\), with \(\psi\in C_c^\infty(B_1)\), \(\psi\ge 0\). To prove~\eqref{eq:limiteforte} it is 
enough to show that \(\{v_j:=u_j\star \psi_{\e_j}\}\) is precompact in \(L^1_{\rm loc}(B^d_1)\). 
Note that by convolving~\eqref{talk1/2} we get that \(v_j\) solves
\begin{equation}\label{eq:zero}
D  v_j=\Div Y_j+h_j,
\end{equation}
where, to simplify the notation, we have set \(D=D'\), \(\Div=\Div'\) and 
\[
Y_j:=X_j\star \psi_{\e_j} \in C^\infty_c(B_1^d,\R^n\otimes\R^n), \qquad h_j=(f_j+g_j)\star \psi_{\e_j}\in C^\infty_c(B_1^d,\R^n)
\] 
are smooth functions compactly supported in \(B_1^d\). Note that, by \eqref{talk1}, \eqref{talk2} and the positivity of \(u_j\) 
\[
v_j\geq 0,
\qquad
\int |Y_j|\rightarrow 0
\qquad
\mbox{and} \qquad \sup_j \int |h_j| <+\infty.
\]
We can solve the system \eqref{eq:zero} by taking another divergence  and inverting the Laplacian  using the potential theoretic solution (note that all the functions involved are compactly supported):
\begin{equation}\label{eq:system}
 v_j = \Delta^{-1} \Div (\Div Y_j)   +\Delta^{-1}  \Div h_j.
\end{equation}
Recall that 
\begin{equation}\label{laplacian}
\Delta^{-1} w = E\star w,
\end{equation}
with $E(x) = - c_d |x|^{2-d}$ if $d\geq 3 $ and  $E(x) = c_2 \log|x|$ if $d=2$, for some positive constants $c_d$, depending just on the dimension. 
Hence, denoting by P.V. the principal value,
\[
\begin{split}
\Delta^{-1} \Div (\Div Y_j) (x) &= K\star Y_j (x)\\
&: = {\rm P.V.}\;c_d \int_{\R^d}\frac{(x-y)\otimes(x-y)-|x-y|^2\Id}{|x-y|^{d+2}}:Y_j(y)dy,
\end{split}
\]
and
\[
\Delta^{-1} \Div h_j (x)= G\star h_j(x):= c_d \int_{\R^d}\Big\langle \frac{x-y}{|x-y|^{d}}, h_j(y)  \Big\rangle dy.
\]
By the Frechet-Kolomogorov compactness theorem, the operator 
$h\mapsto G\star h :L_c^1(B_1^d)\rightarrow L^1_{\rm loc}(\R^d)$ is compact (where \(L^1_c(B_1^d)\) are the \(L^1\) functions with compact support in \(B_1^d\)). 
Indeed, for $M\geq 1$, by direct computation one verifies that
\begin{equation}\label{eq:frechet}
\begin{split}
\int_{B^d_M} |G\star h (x + v) - G\star h(x)|dx \leq C |v|\log\left(\frac{eM}{|v|}\right)\int_{B^d_1}|h|dx,
\qquad \forall v\in B^d_{1}.
\end{split}
\end{equation}
In particular,  $\{b_j:=G\star h_j\}$ is precompact  in $L^1_{\rm loc}(\R^d)$. 
The first term is more subtle: the kernel $K$ defines a Calderon-Zygmund operator $Y\mapsto K\star Y$ on Schwarz functions that can be extended to a bounded operator from $L^1$ to $L^{1,\infty}$, \cite[Chapter 4]{GrafakosCla}. In particular we can bound the quasi-norm of $a_j := K\star Y_j$ as
\begin{equation}\label{eq:zyg}
[a_j]_{L^{1,\infty}(\R^d)} := \sup_{\lambda>0} \lambda |\{|a_j|>\lambda \}|  \le C \int_{B^d_1}|Y_j|  \rightarrow 0. 
\end{equation}
Moreover,  $ K\star Y_j\weakstarto 0$ in the sense of distributions, since \(\langle K\star Y_j, \varphi\rangle=\langle Y_j, K\star \varphi\rangle\to 0\) for \(\varphi\in C_c^1(\R^d)\). We can therefore write
\[
0\leq v_j = a_j + b_j,
\]
with $a_j\rightarrow 0$ in $L^{1,\infty}$ by \eqref{eq:zyg}, $a_j\weakstarto  0$ in the sense of distributions and $\{b_j\}$  pre-compact in $L_{\rm loc}^1$ by \eqref{eq:frechet}.
 Lemma \ref{l:balance} below implies that  $v_j$ is strongly precompact in $L_{\rm loc}^1$, which is the desired conclusion. 
\end{proof}

\begin{lemma}\label{l:balance}
Let $\{v_j\}, \{a_j\}, \{b_j\} \subset L^1(\R^d)$ such that
\begin{itemize}
\item[(i)] $0\leq v_j = a_j + b_j $, 
\item[(ii)] $\{b_j\}$ strongly precompact in $L^1_{\rm loc}$,
\item[(iii)] $a_j\rightarrow 0$ in $L^{1,\infty}$ and $a_j \weakstarto 0$ in the sense of distributions.
\end{itemize}
Then  $\{v_j\}$ is strongly precompact in  $L_{\rm loc}^1$.
\end{lemma}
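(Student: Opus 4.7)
\textbf{Proof proposal for Lemma \ref{l:balance}.} The plan is to exploit the non-negativity of $v_j$ together with a.e.\ and distributional convergence, and then close the argument by Scheffé's lemma. Working along an arbitrary subsequence (which we will not relabel), hypothesis (ii) lets us extract a further subsequence with $b_j \to b$ in $L^1_{\rm loc}(\R^d)$ and pointwise a.e.; hypothesis (iii) gives $a_j \to 0$ in measure (since convergence in $L^{1,\infty}$ implies convergence in measure), so after a further diagonal extraction $a_j \to 0$ a.e.\ as well. Consequently $v_j = a_j + b_j \to b$ a.e., and since $v_j \geq 0$ we have $b \geq 0$ a.e.

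The second step is to upgrade this pointwise convergence to convergence of integrals on suitable sets. For every $\varphi \in C_c^\infty(\R^d)$, the distributional convergence $a_j \weakstarto 0$ combined with strong $L^1_{\rm loc}$ convergence of $b_j$ yields
\begin{equation*}
\int_{\R^d} v_j\,\varphi\, dx = \int_{\R^d} a_j\,\varphi\, dx + \int_{\R^d} b_j\,\varphi\, dx \longrightarrow \int_{\R^d} b\,\varphi\, dx.
\end{equation*}
In particular the non-negative measures $v_j\, \mathcal L^d$ converge weakly$^*$ to $b\, \mathcal L^d$. A standard outer/inner approximation by test functions then shows that, for every open ball $B_r$ such that $b(\partial B_r) = 0$ (a condition failing only on a countable set of radii), $\int_{B_r} v_j\, dx \to \int_{B_r} b\, dx$.

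The third and concluding step is Scheffé's identity: on such a ball $B_r$, using $v_j, b \geq 0$ and $v_j \to b$ a.e.,
\begin{equation*}
\int_{B_r} |v_j - b|\, dx = \int_{B_r} v_j\, dx + \int_{B_r} b\, dx - 2\int_{B_r} \min(v_j, b)\, dx,
\end{equation*}
and the last integral tends to $\int_{B_r} b\, dx$ by dominated convergence (the integrand is bounded by $b \in L^1(B_r)$ and converges a.e.\ to $b$). Combined with $\int_{B_r} v_j \to \int_{B_r} b$, this forces $\int_{B_r} |v_j - b| \to 0$. Since every compact set $K \subset \R^d$ is contained in such a ball, we obtain $v_j \to b$ in $L^1_{\rm loc}$, proving precompactness.

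The main obstacle is getting from weak$^*$ convergence of measures to $L^1_{\rm loc}$ convergence of the densities; this is where the sign condition $v_j \geq 0$ enters in an essential way (without it, the $L^{1,\infty}$ decay of $a_j$ would not suffice to prevent mass oscillation), and where Scheffé's identity provides the right bridge between a.e.\ convergence, mass convergence, and strong $L^1$ convergence.
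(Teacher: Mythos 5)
Your proof is correct, but it follows a genuinely different route from the one in the paper. The paper's argument shows directly that $a_j\to 0$ in $L^1_{\rm loc}$ via the decomposition $|a_j| = a_j + 2a_j^-$: the bound $a_j^- \le |b_j|$ coming from $v_j\ge 0$ makes $\{\chi a_j^-\}$ equi-integrable, so Vitali's theorem together with the convergence in measure from $L^{1,\infty}$ gives $\int \chi a_j^- \to 0$, while $\int \chi a_j\to 0$ by the distributional convergence; once $a_j \to 0$ in $L^1_{\rm loc}$, precompactness of $v_j = a_j + b_j$ follows trivially. You instead pass to a subsequence to identify the limit $b$ of $b_j$, show $v_j\to b$ both a.e.\ (from $L^{1,\infty}\Rightarrow$ in measure $\Rightarrow$ a.e.\ along a subsequence) and in the weak$^*$ sense of measures, and then invoke Scheff\'e's identity to upgrade to $L^1_{\rm loc}$. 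Both approaches use the same three hypotheses in exactly the same roles (positivity controls $a_j^-$ by $b_j$, respectively forces $b\ge 0$; $L^{1,\infty}$ decay provides convergence in measure; distributional convergence handles the signed part), but Vitali plays the bridging role in the paper where Scheff\'e plays it for you. The paper's version is marginally more economical since it never needs to name the limit; yours makes the limit explicit and is closer to the classical Scheff\'e template. Two small remarks: ``diagonal extraction'' is not the standard phrase for what you do (it is simply a further subsequence extraction), and the caveat about $b\,\mathcal L^d(\partial B_r)=0$ ``failing only on a countable set of radii'' is superfluous here, since $\mathcal L^d(\partial B_r)=0$ holds for \emph{every} $r$.
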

\begin{proof}
It is enough to show that \(\chi |a_j|\to 0\) in \(L^1\) for \(\chi\in C_c^\infty (\R^d)\), \(\chi\ge0\). The first condition implies that $a_j^-\leq |b_j|$, hence the sequence $\{\chi a_j^-\}$ 
is equi-integrable and thus, by (iii) and Vitali convergence Theorem, it converges to zero in \(L_{\rm loc}^1\), hence
\[
\int \chi |a_j|=\int \chi a_j + 2\int \chi a_j^-\to 0,
\]
where the first integral goes to zero by (iii).
\end{proof}

The following Lemma is a key step in the proof of Theorem~\ref{thm:main}:

\begin{lemma}\label{abscont}
Let \(F\) be an integrand satisfying condition \((AC)\) at every \(x\) in \(\Omega\) and let \(V\in \mathscr V_F(\Omega)\), see~\eqref{eq:V}. Then $\|V_*\| \ll \H^d$.
\end{lemma}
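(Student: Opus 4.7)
The goal is $\|V_*\|\ll\H^d$; by the standard measure-theoretic criterion (see e.g.\ \cite[Theorem~2.56]{AFP}), this reduces to $\Theta^{d*}(x,\|V_*\|)<+\infty$ for $\|V_*\|$-a.e.\ $x\in\Omega$. The plan is to apply the Strong Constancy Lemma~\ref{t:integrality} to the blow-ups of $V$ at a generic point and convert the resulting strong $L^1$-convergence of pushforwards into a density bound.

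I would focus on a ``good'' point $x$ where the conclusions of Lemmas~\ref{almeno}, \ref{conv}, \ref{stationary}, \ref{atomic} all hold, together with $\Theta^d_*(x,V_*)>0$ (from \eqref{v*3}) and the Besicovitch bound $\limsup_{r\to 0}|\delta_F V|(B_r(x))/\|V\|(B_r(x))<+\infty$. By Lemma~\ref{atomic}, $\mu_x=\delta_{T_x}$ for some $T_x\in G(n,d)$, and by Lemma~\ref{stationary} every tangent measure $\sigma\in {\rm Tan}(x,\|V_*\|)$ is translation invariant along $T_x$. Along any sequence $r_j\to 0$ with $V_{x,r_j}\weakstarto\sigma\otimes\delta_{T_x}$ I would apply Lemma~\ref{t:integrality} to $V_j:=V_{x,r_j}$, frozen integrand $F_j:=F_x$, and $S:=T_x$. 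Its three hypotheses follow respectively from: the scaling identity in the proof of Lemma~\ref{stationary} (giving $|\delta_{F_x}V_{x,r_j}|(B)\to 0$ thanks to the Besicovitch bound); the autonomy of $F_x$ in the $z$-variable combined with continuity of $B_{F_x}(\cdot)$ on the compact Grassmannian; and the weak-$*$ convergence $V_{x,r_j}\weakstarto\sigma\otimes\delta_{T_x}$ tested against the continuous map $T\mapsto|T-T_x|$, with a spatial cutoff to deal with the non-compactness of the open ball $B$. The conclusion produces $\gamma\in L^1(B^d)$ with
\[
c_x\,(\Pi_{T_x})_\#\|V_{x,r_j}\|\longrightarrow\gamma\,\H^d\res B^d\quad\text{in total variation on every }B^d_t,\ t<1,
\]
where $c_x:=F(x,T_x)>0$ and $\int_{B^d}\gamma\le c_x$.

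The heart of the argument is to convert this strong convergence into a density bound. The inclusion $B_\rho\subset(\Pi_{T_x})^{-1}(B^d_\rho)\cap B$, valid for every $\rho\le 1$, combined with total-variation convergence, yields
\[
\limsup_{j\to\infty}\frac{\|V\|(B_{\rho r_j}(x))}{\|V\|(B_{r_j}(x))}=\limsup_{j\to\infty}\|V_{x,r_j}\|(B_\rho)\le\frac{1}{c_x}\int_{B^d_\rho}\gamma\,d\H^d.
\]
Evaluating at a Lebesgue point of $\gamma$ (which, up to a shift, can be identified with the origin of $T_x$ for $\|V_*\|$-a.e.\ $x$) gives a scale-comparison estimate between the $d$-densities of $\|V\|$ at scales $\rho r_j$ and $r_j$. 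Iterating this comparison at geometric scales $\rho^k r_j$ and combining with the lower density $\Theta^d_*(x,V_*)>0$ (which, via Lemma~\ref{almeno}, forces $\omega_d\gamma(0)/c_x\ge 1$) together with the $L^1$-bound $\int_{B^d}\gamma\le c_x$ (which keeps $\gamma(0)$ finite), one extracts a uniform-in-scale bound on $\|V\|(B_r(x))/r^d$, hence $\Theta^{d*}(x,V_*)<+\infty$. The main obstacle I anticipate is precisely this final cascading estimate: the strong-convergence bound is asymptotic in $j$, so coordinating the limits $\rho\to 0$ and $j\to\infty$ while keeping the comparison constant uniform across scales is delicate, and is the point at which the ideas borrowed from~\cite{Allard84BOOK,DePRin16} enter.
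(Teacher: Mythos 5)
Your reduction to showing $\Theta^{d*}(x,\|V_*\|)<\infty$ for $\|V_*\|$-a.e.\ $x$ is sound in principle, and you correctly identify Lemma~\ref{t:integrality} as the engine; the verification of its hypotheses via Lemmas~\ref{conv}, \ref{stationary}, \ref{atomic} and a cutoff is also essentially right. The gap is the final ``cascading'' step, and I do not think it can be repaired in the form you propose. Lemma~\ref{t:integrality} controls only the \emph{shape} of the normalized blow-ups $\|V_{x,r_j}\|$, which are normalized to have mass at most $1$ on $B$. If $\|V\|(B_{r_j}(x))/r_j^d\to\infty$, the blow-ups can still converge weakly to a tangent measure $\sigma$ whose $\Pi_S$-projection is a bounded $L^1$ density times $\H^d$: the divergent normalization factor has been absorbed. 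Your inequality $\limsup_j\|V_{x,r_j}\|(B_\rho)\le c_x^{-1}\int_{B^d_\rho}\gamma$ is a constraint on $\sigma$, not on $\|V\|(B_r(x))/r^d$, and produces no contradiction with $\Theta^{d*}=\infty$; in fact combining it with the lower density $\Theta^d_*>0$ only yields $(c/2)\omega_d\rho^d\cdot\limsup_j r_j^d/\|V\|(B_{r_j}(x))\le c_x^{-1}\int_{B^d_\rho}\gamma$, which is vacuous precisely when the upper density is infinite. There is a second, independent defect: the tangent measure $\sigma_t$ supplied by Lemma~\ref{almeno} comes from a $t$-dependent sequence of radii, so invoking it to bound $\gamma(0)$ from below for the fixed sequence $r_j$ feeding Lemma~\ref{t:integrality} is not legitimate.

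The paper takes a genuinely different route. It writes the Radon--Nikodym decomposition $\|V_*\|=f\,\H^d\res\{\Theta^d_*(\cdot,V)>0\}+\|V_*\|^s$, with $\|V_*\|^s$ concentrated on an $\H^d$-null set $E$, and argues by contradiction that $\|V_*\|^s\neq 0$. The key mechanism, absent from your proposal, is mutual singularity of the projections: since $\H^d(E)=0$, also $\H^d(\Pi_S(E))=0$, so $(\Pi_S)_\#\|W_j\|^s$ is concentrated on an $\H^d$-null set $E_j\subset B^d_1$. The total-variation convergence of $(\Pi_S)_\#(F(\bar x+r_j\cdot,S)\|W_j\|)$ to $\gamma\,\H^d\res B^d$ then forces the mass of the projected singular part restricted to $E_j\cap B^d_{1/2}$ to vanish (because $\gamma\,\H^d$ does not charge $E_j$), whereas by lower semicontinuity it must be bounded below by $|(\Pi_S)_\#(F(\bar x,S)\chi\sigma)|(B^d_{1/2})>0$ for a suitably chosen density point $\bar x$ of $\|V_*\|^s$ with nontrivial tangent $\sigma\res B_{1/2}\neq 0$. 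This is a concentration-versus-absolute-continuity clash, not a density estimate, and it is what the Strong Constancy Lemma is actually used to deliver.
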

\begin{proof}
Since  by \eqref{v*3}, 
$\Theta_*^d(\cdot,V)>0$  $\|V_*\|$-a.e.,  classical differentiation theorems for measures  imply that
\[
\H^d\res  \{\Theta_*^d(\cdot,V)>\lambda\}\le \frac{1}{\lambda}\|V_*\| \qquad \forall \lambda>0,
\]
 see~ \cite[Theorem 6.9]{Mattila}. Hence  \(\H^d\res  \{\Theta_*^d(\cdot,V)>0\}\) is a \(\sigma\)-finite measure and  by the Radon-Nikodym Theorem
\begin{equation}\label{rndecompostion}
\|V_*\| 
= f \H^d\res\{\Theta_*^d(\cdot,V)>0\} + \|V_*\|^s
\end{equation}
for some psitive Borel function  $f$ and  $\|V_*\|^s$ is concentrated on a set $E\subset\{\Theta_*^d(\cdot,V)>0\}$ such that $\H^d(E)=0$: in particular
 $\H^d(\Pi(E))=0$ whenever $\Pi$ is an orthogonal projection onto a $d$-dimensional subspace of $\R^n$. Hence \(\|V_*\|^s\) and  \(f \H^d\res\{\Theta_*^d(\cdot,V)>0\}\)   are  mutually singular Radon measure (the fact that they are Radon measures follows trivially from~\eqref{rndecompostion}).

 We are going to show that \(\|V_*\|^s=0\), which clearly concludes the proof. To this aim, let us assume by contradiction that \(\|V_*\|^s>0\) and let us choose a point \(\bar x\in \Omega\) and a sequence of radii \(r_j\to 0\)  such that:
 \begin{enumerate}
\item[(i)]   
\begin{equation}\label{eq:quattro}
\lim_{j\to \infty} \frac{\|V_*\|^s(B_{r_j}(\bar x))}{\|V_*\|(B_{ r_j}(\bar x))} =\lim_{j\to \infty} \frac{\|V_*\|(B_{r_j}(\bar x))}{\|V\|(B_{ r_j}(\bar x))} =1.
\end{equation}
\item[(ii)] There exists \(\sigma\in {\rm Tan} (\bar x, \|V\|)= {\rm Tan} (\bar x, \|V_*\|)= {\rm Tan} (\bar x, \|V_*\|^s)\), with \(\sigma\res B_{1/2}\ne 0\).
\item[(iii)]
\begin{equation}\label{ow}
\limsup_{j\to\infty} \frac{|\delta_F V|(B_{r_j}(\bar x))}{\|V\|(B_{r_j}(\bar x))}\le C_{\bar x} < + \infty.
\end{equation}
\item[(iv)] 
 \begin{equation}\label{eq:uno}
 V_j := V_{\bar x,r_j}  
 \weakstarto  \sigma \otimes \delta_{S},
 \end{equation}
 where \(S\in G(n,d)\) and \(\partial_e \sigma=0\) for every \(e\in S\).
\end{enumerate}
 Here the first, second and third conditions hold \(\|V_*\|^s\)-a.e. by simple measure theoretic arguments and by~\eqref{v*2} and~\eqref{v*4},  and the fourth one holds \(\|V_*\|^s\)-a.e. as well by combining Lemma~\ref{stationary}, Lemma~\ref{atomic} and~\eqref{v*4}.
  
Fix a smooth cutoff function  $\chi$ with $0\leq \chi\leq 1$, $\spt(\chi)\subset B_1$ and $\chi = 1$ in $B_{1/2}$ and define $W_j:= \chi V_j$ so that 
\[
\|W_j\|=\chi f \H^d\res\{\Theta_*^d(\cdot,V)>0\} + \|W_j\|^s
\]
where \(\|W_j\|^s=\chi\|V_*\|^s\). In particular 
\begin{equation}\label{concentrato}
(\Pi_S)_\#\|W_j\|^s \qquad\text{is concentrated on }\qquad  E_j:=\Pi_S \left(\frac{E-\bar x}{r_j}\right)\cap B^d_1,
\end{equation}
and thus 
\begin{equation}\label{concentrato1}
 \H^d (E_j)=0.
 \end{equation}
  Note furthermore that
\begin{equation}\label{supbound}
\sup_j | \delta_{F_j} W_j|(\R^d)<+\infty,
\end{equation}
where \(F_j(z,T)=F(\bar x+r_j z,T)\). Indeed for $\varphi \in C^\infty_c(B_1,\R^n)$ 
\begin{equation*}
\begin{split}
|\delta_{F_j}W_j(\varphi)| &=| \delta_{F_j}(\chi V_j)(\varphi)|\\
&=\Big| \int  r_j\big\langle d_x F(\bar x+r_j z, T), \chi(z) \varphi(z)\rangle d V_j(z,T)\\
&\quad+ \int B_{F} (\bar x +r_j z,T):D\varphi(z)\, \chi(z) dV_j(z,T)\Big|	\\
&=\Big| \int  r_j \big\langle d_x F(\bar x +r_j z, T), \chi \varphi\rangle  d V_j(z,T)\\
&\quad+ \int B_{F} (\bar x +r_j z,T):D(\chi \varphi)(z) dV_j(z,T)\\
&\quad- \int B_{F} (\bar x +r_j z,T):D \chi(z) \otimes \varphi(z) dV_j(z,T)\Big|  \\
&\leq |\delta_{F_j} V_j(\chi\varphi)|+ \|F\|_{C^1}\|V_j\|(B_1)\|D \chi\|_\infty  \|\varphi\|_\infty     \\
&\leq    r_j\frac{|\delta_F V|(B_{r_j}(\bar x))}{\|V\|(B_{r_j}(\bar x))}\|\varphi\|_\infty+\|F\|_{C^1}\|V_j\|(B_1)\|D \chi\|_\infty  \|\varphi\|_\infty,  
\end{split}
\end{equation*}
so that~\eqref{supbound} follows from~\eqref{ow} and the fact that \(\|V_j\|(B_1)\le 1\). Finally, by~\eqref{eq:uno},
\begin{equation*}\label{eq:conv}
\begin{split}
\lim_{j}\int_{G(B_1)} |T-S| dW_j(z,T)&=\lim_j \int_{G(B_1)}|T-S| \chi(z)  dV_j(z,T)\\
 &= \int_{G(B_1)}|T-S| \chi(z) d\delta_{S}(T)  d\sigma(z)=0.
 \end{split}
\end{equation*}
Hence the sequences of integrands \(\{F_j\}\) and of varifolds  \(\{W_j\}\) satisfy the assumptions of Lemma~\ref{t:integrality} (note indeed that \(B_{F_j}(z,T)=B_{F}(\bar x+r_jz,T)\) so that assumption (2) in Lemma~\ref{t:integrality} is satisfied). Thus we deduce the existence of  $\gamma \in L^1(\H^d\res B^d_1)$ such that, along a (not relabelled) subsequence, for every \(0<t<1\)
\begin{equation}\label{eq:contradiction}
\Big| (\Pi_S)_{\#}(F(\bar x +r_j(\cdot),S)\|W_j\|)- \gamma \mathcal H^d\res B^d\Big| (B^d_t )\longrightarrow 0.
\end{equation}
By~\eqref{eq:quattro} we can substitute  $\|W_j\|^s$ for $\|W_j\|$ in \eqref{eq:contradiction} to get 
\begin{equation*}\label{eq:contradiction2}
\Big| (\Pi_S)_{\#}(F(\bar x +r_j(\cdot),S)\|W_j\|^s)- \gamma \mathcal H^d\res B^d_1\Big| (B^d_t )\longrightarrow 0.
\end{equation*}
By point (ii) above,  \( F(\bar x +r_j(\cdot),S)\|W_j\|^s\weakstarto F(\bar x,S)\chi\sigma\) with  \(\sigma\res B_{1/2}\ne 0\).  Recalling that \(F(\bar x, S)>0\) we then have 
\[
\begin{split}
0&< \big|(\Pi_S)_{\#}(F(\bar x,S)\chi\sigma)\big|(B^d_{1/2}) \\
&\le \liminf _{j\to \infty}  \big|(\Pi_S)_{\#}(F(\bar x +r_j(\cdot),S)\|W_j\|^s)\big|(B^d_{1/2})
\\
&= \liminf _{j\to \infty}  \big|(\Pi_S)_{\#}(F(\bar x +r_j(\cdot),S)\|W_j\|^s)\big|(E_j\cap B^d_{1/2})
\\
&\le  \limsup_{j\to \infty}  \Big|(\Pi_S)_{\#}(F(\bar x +r_j(\cdot),S)\|W_j\|^s)-\gamma \H^d\res B^d \Big| (E_j\cap B^d_{1/2})=0,
\end{split}
\]
since \((\Pi_S)_{\#}\|W_j\|^s\) is concentrated on \(E_j\) and \(\H^d(E_j)=0\), see~\eqref{concentrato} and~\eqref{concentrato1}. This contradiction concludes the proof. 

\end{proof}

\section{Proof of Theorem \ref{thm:main}}
\begin{proof}[Proof of Theorem \ref{thm:main}]
\emph{Step 1: Sufficiency.} 
Let \(F\) be a \(C^1\) integrand satisfying the \((AC)\)  condition at every \(x\in \Omega\) and let \(V\in \mathscr V_{F}(\Omega)\), we want to apply Lemma~\ref{rect} to \(\|V_*\|\). 
Note that, according to Lemma~\ref{abscont} and \eqref{v*3},
\[
\H^d\res\{x\in \Omega: \Theta^d_*(x,\|V_*\|)>0\}\ll \|V_*\|\ll \H^d\res\{x\in \Omega: \Theta^d_*(x,\|V_*\|)>0\}.
\]
Since, by \cite[Theorem 6.9]{Mattila}, \(\H^d\big(\{x\in\Omega: \Theta^{d*}(x,\|V_*\|)=+\infty\}\big)=0 \), we deduce that
\[
0<\Theta^{d}_*(x,\|V_*\|)\le\Theta^{d*}(x,\|V_*\|)<+\infty\qquad\mbox{for \(\|V_*\|\)-a.e. \(x\in \Omega\)},
\]
hence assumption (i) of Lemma~\ref{rect} is satisfied.  By Lemma~\ref{atomic}, \(V_*=\|V_*\|\otimes\delta_{T_x}\) for some \(T_x\in G(n,d)\), and, combining this with Lemma~\ref{stationary} and~\eqref{v*4}, 
 for \(\|V_*\|\)-almost every \(x\in \Omega\) every \(\sigma\in {\rm Tan} (x,\|V_*\|)\) is invariant along the directions of \(T_x\), so that also assumption (ii) of Lemma~\ref{rect} is satisfied. Hence
\[
\|V_*\|=\theta \H^d\res (K\cap \Omega),
\]
for some rectifiable set \(K\) and Borel function \(\theta\). Moreover, again by Lemma~\ref{rect},   \(T_x K=T_x\) for \(\|V_*\|\)-almost every \(x\).  This  proves that \(V_*\) is \(d\)-rectifiable.

\medskip
\noindent
\emph{Step 2: Necessity.}  Let us now assume that $F(x,T)\equiv F(T)$ does not depend on the point, but just on the tangent plane and let us  suppose that $F$ does not verify the atomic condition $(AC)$. We will show the existence of a varifold \(V\in \mathscr V_F(\R^n)\), with positive lower $d$-dimensional density (namely $V=V_*$), which is not $d$-rectifiable.
Indeed the negation of \((AC)\)  means that there exists a probability measure $\mu$ on $G(n,d)$, such that one of the following cases happens:
\begin{itemize}
\item[1)] ${\rm dim}\,\Ker A(\mu) ={\rm dim}\,\Ker A(\mu)^* > n-d$
\item[2)] ${\rm dim}\,\Ker A(\mu) ={\rm dim}\,\Ker A(\mu)^* =n-d$ and  $\mu\neq \delta_{T_0}$,
\end{itemize}
where $A(\mu):=\int_{G(n,d)} B_F(T) \, d\mu(T)$ and $B_F(T)\in\R^n\otimes\R^n$ is constant in $x$.
Let $W:=\Im A(\mu)^*$,  $k={\rm dim}\,W \leq d$ and let us define the varifold 
$$
V(dx,dT):=\H^k \res W(dx) \otimes \mu(dT)\in \mathbb V_{d}(\R^n).
$$ 
Clearly \(V\) is not \(d\)-rectifiable since either \(k<d\) or \(\mu\ne \delta_{W}\).
We start by noticing that $V=V_*$, indeed for  \(x\in W\)
\begin{equation}\label{eq:densitycontr}
\Theta^d(x,V)=\lim_{r \to 0} \frac{\H^k(B_r(x)\cap W)}{\omega_ d r^d}=
\begin{cases} 
1 \qquad &\textrm{if \(k=d\)}\\ 
+\infty &\textrm{if \(k<d\)}.
\end{cases}
\end{equation}
 Let us now prove that \(V\in \mathscr V_F(\R^n)\). For every $g \in  C_c^1(\R^n,\R^n)$, we have
$$
  \delta_F V(g) = \int_{W} A(\mu) : Dg  \, d\H^k  = - \big\langle g, A(\mu) D (\H^k\res W)\big\rangle=0
  $$
since \( D (\H^k\res W)\in W^\perp=[\Im A(\mu)^*]^\perp=\Ker A(\mu)\).  Hence \(V\) is \(\FF\)-stationary and in particular \(V\in \mathscr V_F(\R^n)\) which, together with~\eqref{eq:densitycontr} concludes the proof.
\end{proof}

\section{Proof of Theorem~\ref{thm:main2}}\label{sec:ac}
In this section we prove Theorem~\ref{thm:main2}. As explained in the introduction, it is convenient to identify the Grassmannian \(G(n,n-1)\) with the projective space  \(\mathbb{RP}^{n-1}=\mathbb S^{n-1}\big/\pm\) via the map
\[
\mathbb S^{n-1} \ni\pm \nu\mapsto \nu^\perp.
\]
Hence an \((n-1)\)-varifold \(V\) can be thought as a positive Radon measure \(V\in \mathcal M_{+}(\Omega\times \mathbb S^{n-1})\) even in the \(\mathbb S^{n-1}\) variable, i.e. such that 
\[
V(A\times S)=V(A\times (-S))\qquad\mbox{for all \(A\subset \Omega\), \(S\subset \mathbb S^{n-1}\).}
\]
In the same way, we identify  the integrand \(F:\Omega\times G(n,n-1)\to \R_{>0}\) with a positively one homogeneous even function \(G:\Omega\times \R^{n}\to \R_{\ge0}\) via the equality
\begin{equation}\label{e:identification}
G(x, \lambda  \nu):=|\lambda| F(x,  \nu^{\perp}) \qquad\textrm{for all \(\lambda \in \R\) and  \(\nu \in \mathbb S^{n-1}\)}.
\end{equation}
Note that \(G\in C^1(\Omega\times (\R^{n}\setminus\{0\}))\) and that  by one-homogeneity: 
\begin{equation}\label{eulercod1}
\langle d_{e} G(x, e),e\rangle = G(x,e)\qquad\mbox{for all \(e\in\R^n\setminus \{0\}\).}
\end{equation}
With these identifications, it is a simple calculation to check that:
\begin{equation*}
\begin{split}
\delta_{F} V(g)&=\int_{\Omega\times \mathbb S^{n-1}} \langle d_x G(x,\nu), g(x)\rangle\, dV(x,\nu)\\
& \quad + \int_{\Omega\times \mathbb S^{n-1}}\Big(G(x,\nu)\Id-\nu \otimes d_\nu G(x,\nu)\Big):Dg(x)\, dV(x,\nu),
\end{split}
\end{equation*}
see for instance~\cite[Section 3]{Allard84BOOK} or ~\cite[Lemma A.4]{dephilippismaggi2}. In particular, under the correspondence~\eqref{e:identification}
\begin{equation*}
B_{F}(x,T)=G(x,\nu)\Id-\nu \otimes d_\nu G(x,\nu)=:B_{G}(x,\nu),\qquad T=\nu^\perp.
\end{equation*}
Note that \(B_{G}(x,\nu)=B_{G}(x,-\nu)\) since \(G(x,\nu)\) is even. Hence the atomic condition at \(x\) can be re-phrased as: 
\begin{itemize}
\item[(i)]  \(\dim\Ker A_x(\mu)\le 1\) for all {\em even} probability measures \(\mu \in \mathcal P_{\rm even} (\mathbb S^{n-1})\),
\item[(ii)]  if \(\dim\Ker A_x(\mu)= 1\) then \(\mu=(\delta_{\nu_0}+\delta_{-\nu_0})\big/2\) for some \(\nu_0\in \mathbb S^{n-1}\),
\end{itemize}
where
\[
A_x(\mu)=\int_{\mathbb S^{n-1}} B_G(x,\nu)d\mu(\nu).
\]
We are now ready to prove Theorem~\ref{thm:main2}.
\begin{proof}[Proof of Theorem~\ref{thm:main2}]
Since the \((AC)\) condition deals only with the behavior of the frozen integrand \(G_{x}(\nu)=G(x,\nu)\), for the whole proof \(x\) is fixed and for the sake of readability we drop the dependence on \(x\).

\medskip
\noindent
{\em Step 1: Sufficiency}. Let us assume that \(G:\R^n\to \R\) is even,  one-homogeneous  and strictly convex. We will show that the requirements (i) and (ii)  in the \((AC)\) condition are satisfied. First note that, by one-homogeneity, the strict convexity of \(G\) is equivalent to:
\begin{equation}\label{eq:sconv}
G(\nu)> \langle d_\nu G(\bar \nu),\nu\rangle\qquad \mbox{for all \(\bar \nu,\nu\in \mathbb S^{n-1}\) and  \(\nu\ne \pm \bar \nu\).}
\end{equation}
Plugging \(-\nu\) in~\eqref{eq:sconv} and exploiting the fact that \(G\) is even we obtain
\begin{equation}\label{eq:sconv2}
G(\nu)>| \langle d_\nu G(\bar \nu),\nu\rangle|\qquad \mbox{for all \(\bar \nu,\nu\in \mathbb S^{n-1}\) and  \(\nu\ne \pm \bar \nu\).}
\end{equation}
Let now \(\mu \in \mathcal P_{\rm even} (\mathbb S^{n-1})\) be an even probability measure,
\[
A(\mu)=\int_{\mathbb S^{n-1}} \Big(G(\nu)\Id-\nu\otimes d_{\nu} G(\nu)\Big) d\mu(\nu) 
\]
and assume there exists \(\bar \nu\in \Ker A(\mu)\cap \mathbb S^{n-1}\). We then have
\[
\begin{split}
0&=\langle d_\nu G(\bar \nu),A(\mu)\bar\nu \rangle\\
&=\int_{\mathbb S^{n-1}} \Big\{(G(\bar \nu)G(\nu)-\langle d_\nu G(\bar \nu),\nu\rangle\langle d_\nu G( \nu),\bar \nu\rangle\Big\} d\mu(\nu)\\
&\ge \int_{\mathbb S^{n-1} }\Big\{G(\bar \nu)G(\nu)-\big|\langle d_\nu G(\bar \nu),\nu\rangle\big|\big|\langle d_\nu G( \nu),\bar \nu\rangle\big| \Big\}d\mu(\nu)
\end{split}
\]
where we have used~\eqref{eulercod1}. Inequality~\eqref{eq:sconv2}  implies however that the integrand in the last line of the above equation is strictly positive, unless \(\nu=\pm\bar \nu\) for all \(\nu\in \spt \mu\), which immediately implies that the \((AC)\) condition is satisfied.

\medskip
\noindent
{\em Step 2: Necessity}. Let us assume that \(G\) (or equivalently \(F\)) satisfies the \((AC)\) condition, let \(\nu,\bar \nu \in\mathbb S^{n-1}\), \(\nu\ne \pm\bar \nu\)  and define 
\[
\mu=\frac{1}{4} \big(\delta_{\nu}+\delta_{-\nu}+\delta_{\bar \nu}+\delta_{-\bar \nu}\big).
\]
Then the matrix
\[
A(\mu)=\frac{1}{2} B_{G}(\nu)+\frac{1}{2}B_G(\bar \nu)
\]
has full rank. In particular the vectors \(A(\mu)\nu, A(\mu)\bar \nu\) are linearly independent. On the other hand
\[
\begin{split}
2A(\mu)\nu&=B_G(\bar \nu)\nu=G(\bar \nu) \nu-\langle d_\nu G(\bar \nu),\nu\rangle \bar \nu\\
2A(\mu)\bar \nu&=B_G( \nu)\bar \nu=G( \nu) \bar \nu-\langle d_\nu G(\nu),\bar \nu\rangle \nu
\end{split}
\]
and thus, these two vectors are linearly independent if and only if 
\begin{equation*}
G(\nu)G(\bar \nu)-\langle d_\nu G(\bar \nu),\nu\rangle \langle d_\nu G( \nu),\bar \nu\rangle \ne 0.
\end{equation*}
Since \(G\) is positive and \(\mathbb S^{n-1}\setminus \{\pm \bar \nu\}\) is connected for \(n\ge 3\), the above equation implies that 
\begin{equation}\label{e:bella}
G(\nu)G(\bar \nu)-\langle d_\nu G(\bar \nu),\nu\rangle \langle d_\nu G( \nu),\bar \nu\rangle > 0\qquad\mbox{for all \(\nu\ne \pm \bar\nu\).}
\end{equation}
Exploiting that \(G\) is even, the same can be deduced also if \(n=2\).
We now show that~\eqref{e:bella} implies~\eqref{eq:sconv} and thus the strict convexity of \(G\) (actually Step 1 of the proof shows that they are equivalent). Let \(\bar \nu\) be fixed and let us define the linear projection
\[
P_{\bar \nu} \nu=\frac{\langle d_\nu G(\bar \nu),\nu\rangle}{G(\bar \nu)} \, \bar \nu.
\]
We note that by~\eqref{eulercod1} \(P_{\bar \nu} \) is actually a projection, i.e. \(P_{\bar \nu} \circ P_{\bar \nu}=P_{\bar \nu}\). Hence,  
setting \(\nu_t=t\nu+(1-t) P_{\bar \nu} \nu\) for \(t\in [0,1]\), we have \(P_{\bar \nu} \nu_t=P_{\bar \nu} \nu\). Thus
\begin{equation}\label{train}
\nu_t-P_{\bar \nu} \nu_t=t(\nu-P_{\bar \nu} \nu).
\end{equation}
Hence, if we define \(g(t)=G(\nu_t)\), we have, for \(t\in (0,1)\),
\[
tg'(t)=t\langle d_\nu G(\nu_t), \nu-P_{\bar \nu} \nu \rangle=\langle d_\nu G(\nu_t), \nu_t-P_{\bar \nu} \nu_t \rangle >0,
\]
where in the second equality we have used equation~\eqref{train} and the last inequality follows from~\eqref{e:bella} with \(\nu=\nu_t\), and $t>0$. Hence, exploiting also  the one-homogeneity of \(G\),
\[
G(\nu)=g(1)>g(0)=G(P_{\bar \nu} \nu)=\frac{\langle d_\nu G(\bar \nu),\nu\rangle}{G(\bar \nu)} G(\bar \nu)=\langle d_\nu G(\bar \nu),\nu\rangle
\]
which proves~\eqref{eq:sconv} and concludes the proof.
\end{proof}

\appendix

\section{First variation with respect to anisotropic integrands}\label{appendixvariation}
In this section we compute the $\FF$-first variation of a varifold $V$. To this end we recall that, by identifying a \(d\)-plane \(T\) with the orthogonal projection onto \(T\), we can embed \(G(n,d)\) into \(\R^n\otimes \R^n\). Indeed we have  
\begin{equation}\label{id}
G(n,d)\approx \Big\{ T\in \R^n\otimes \R^n : T\circ T=T,\quad T^*=T,\quad \tr T=d \Big\}.
\end{equation}
With this identification, let \(T(t)\in G(n,d)\) be a smooth curve such that \(T(0)=T\). Differentiating the above equalities we get
\begin{equation}\label{utile}
T'(0)=T'(0)\circ T+T\circ T'(0),\qquad (T'(0))^*=T'(0),\qquad \tr T'(0)=0.
\end{equation}
In particular from the first equality above we obtain 
\begin{equation*}
T\circ T'(0)\circ T=0,\qquad T^\perp\circ T'(0)\circ T^\perp=0.
\end{equation*}
Hence
\[
 {\rm Tan}_T G(n,d)\subset  \big\{ S \in \R^n\otimes\R^n: S^*=S,\quad T\circ S\circ T=0,\quad T^\perp\circ S\circ T^\perp=0\big\}.
 \]
 Since \({\rm dim }  {\rm Tan}_T\, G(n,d)={\rm dim}\, G(n,d)=d(n-d)\) the above inclusion is actually an equality.   To compute the anisotropic first variation of a varifold we need the following simple Lemma:
 \begin{lemma}\label{der}
Let  \(T\in G(n,d)\) and  \(L\in \R^n\otimes \R^n\), and let us define \(T(t)\in G(n,d)\)  as the orthogonal projection onto  \((\Id+tL)(T)\) (recall the identification~\eqref{id}). Then 
\begin{equation*}\label{utilissima}
T'(0)=T^\perp \circ L\circ T+(T^\perp \circ L\circ T)^* \in  {\rm Tan}_T G(n,d).
\end{equation*}
 \end{lemma}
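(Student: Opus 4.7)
The plan is to characterize $T(t)$ by its defining relation and then differentiate, using the algebraic constraints on tangent vectors to $G(n,d)$ derived just above in~\eqref{utile} to extract the value of $T'(0)$ on both $T$ and $T^\perp$.

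First I would observe that for small $t$, the map $\Id + tL$ is invertible and $(\Id+tL)(T)$ is a $d$-dimensional subspace depending smoothly on $t$; a brief Gram--Schmidt argument starting from a basis of $T$ shows that the orthogonal projection $T(t)$ onto this subspace is a smooth curve in $G(n,d)$ with $T(0)=T$. The defining property of $T(t)$ that I will exploit is that it fixes every vector of $(\Id+tL)(T)$:
\begin{equation*}
T(t)\,(\Id+tL)v = (\Id+tL)v \qquad\text{for all } v\in T.
\end{equation*}
Differentiating this identity at $t=0$ and using $T v = v$ for $v\in T$ gives
\begin{equation*}
T'(0)\,v + T\circ L\, v = L\,v,
\end{equation*}
so that $T'(0)v = T^\perp\circ L\, v$ for every $v\in T$.

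Next I would determine $T'(0)$ on $T^\perp$. The second and third identities of~\eqref{utile} say that $T'(0)$ is symmetric and satisfies $T\circ T'(0)\circ T = 0$ and $T^\perp\circ T'(0)\circ T^\perp=0$. The latter forces $T'(0)w \in T$ for every $w\in T^\perp$, while symmetry yields, for $w\in T^\perp$ and $v\in T$,
\begin{equation*}
\langle T'(0)w,v\rangle = \langle w, T'(0)v\rangle = \langle w, T^\perp\circ L\, v\rangle = \langle w, L v\rangle = \langle T\circ L^{*}w, v\rangle.
\end{equation*}
Since $T'(0)w$ lies in $T$, this determines it completely: $T'(0)w = T\circ L^{*}w = (T^\perp\circ L\circ T)^{*} w$ for $w\in T^\perp$. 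Combining the two cases and noting that $T^\perp\circ L\circ T$ vanishes on $T^\perp$ while $(T^\perp\circ L\circ T)^{*}$ vanishes on $T$, I obtain the claimed identity
\begin{equation*}
T'(0) = T^\perp\circ L\circ T + (T^\perp\circ L\circ T)^{*}.
\end{equation*}

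Finally, membership of this matrix in $\mathrm{Tan}_T G(n,d)$ is automatic from the computation itself, but it is also immediate from the explicit form: it is manifestly symmetric, and a direct check shows that $T$ and $T^\perp$ sandwich both summands to zero, matching the characterization of $\mathrm{Tan}_T G(n,d)$ stated just before the lemma. The only mildly delicate point in the whole argument is the smoothness of $t\mapsto T(t)$, which is the reason I would begin with the Gram--Schmidt remark; once smoothness is in hand, everything reduces to linear algebra on the decomposition $\R^n = T\oplus T^\perp$.
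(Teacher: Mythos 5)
Your argument is correct and follows essentially the same path as the paper's: both differentiate the defining identity $T(t)\circ(\Id+tL)\circ T=(\Id+tL)\circ T$ at $t=0$ to obtain $T'(0)\circ T=T^\perp\circ L\circ T$, and then recover the full matrix from the algebraic constraints on tangent vectors. The only difference is in the bookkeeping at the end: the paper applies the identity $T'(0)=T'(0)\circ T+T\circ T'(0)$ from~\eqref{utile} together with symmetry to write $T\circ T'(0)=(T'(0)\circ T)^*$ in one line, whereas you split $\R^n=T\oplus T^\perp$ and determine $T'(0)$ on each summand using symmetry and $T^\perp\circ T'(0)\circ T^\perp=0$; these are equivalent manipulations of the same constraints, so the proofs are the same in substance.
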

 
 \begin{proof}
One easily checks that \(T(t)\) is  a smooth function of \(T\) for \(t\) small. Since
\[
T(t)\circ (\Id+tL)\circ T=(\Id+tL)\circ T,
\]
 differentiating we get
\begin{equation}\label{percheno}
T'(0)\circ T =(\Id-T)\circ L\circ T=T^\perp \circ L\circ T.
\end{equation}
Using that \((T'(0))^*=T'(0)\), \(T^*=T\),  the first equation in~\eqref{utile} and \eqref{percheno}, 
one obtains
\begin{equation*}
\begin{split}
T'(0)&=T'(0)\circ T+T\circ T'(0)\\
&=T'(0)\circ T+(T'(0)\circ T)^*=T^\perp \circ L\circ T+(T^\perp \circ L\circ T)^*,
\end{split}
\end{equation*}
and this concludes the proof.
\end{proof}
We are now ready to compute the first variation of an anisotropic energy:

\begin{lemma} Let \(F\in C^1(\Omega\times G(n,d))\) and \(V\in \mathbb V_d(\Omega)\), then for \(g\in C^1_c(\Omega,\R^n)\) we have
 \begin{equation}\label{varapp}
\delta_F V(g)
=\int_{G(\Omega)} \Big[\langle d_xF(x,T),g(x)\rangle+ B_F(x,T):Dg(x)  \Big] dV(x,T),
 \end{equation}
 where the matrix \(B_F(x,T)\in \R^n\otimes\R^n\) is uniquely defined by 
\begin{equation}\label{Bapp}
B_F(x,T): L\, := F(x,T) (T:L)+\big\langle d_T F(x,T) ,\,T^\perp\circ L\circ T +(T^\perp\circ L\circ T)^*\big\rangle
 \end{equation} 
 for all \(L\in \R^n\otimes \R^n\).
\end{lemma}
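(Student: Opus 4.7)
The plan is to start from the very definition of the pushforward varifold and differentiate under the integral sign. Using that \(\varphi_t\) is a diffeomorphism of \(\Omega\) onto itself for \(|t|\) small (depending on \(\|Dg\|_\infty\)), we have
\[
\FF(\varphi_t^\# V, \Omega) = \int_{G(\Omega)} F\bigl(\varphi_t(x),\, d_x\varphi_t(T)\bigr)\, J\varphi_t(x,T)\, dV(x,T).
\]
Since \(g\in C^1_c\), the integrand is uniformly bounded in \(t\) and smooth enough to apply dominated convergence, so it suffices to compute the pointwise derivative at \(t=0\) of
\(
\Psi_t(x,T) := F(\varphi_t(x),\, d_x\varphi_t(T))\, J\varphi_t(x,T).
\)
By the chain rule, this breaks into three independent contributions corresponding to the three arguments that depend on \(t\).

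First, the derivative of the \(x\)-argument is immediate since \(\frac{d}{dt}\varphi_t(x)|_{t=0}=g(x)\), yielding \(\langle d_x F(x,T), g(x)\rangle\). Second, writing \(L:=Dg(x)\), the tangent plane \(d_x\varphi_t(T)\) is exactly the image of \(T\) under \(\Id + tL\); Lemma~\ref{der} gives that, under the identification~\eqref{id}, the derivative of the Grassmannian-valued curve \(T(t)=d_x\varphi_t(T)\) at \(t=0\) is
\[
T'(0) = T^\perp\circ L\circ T + (T^\perp\circ L\circ T)^*.
\]
Applying \(d_T F(x,T)\) to this tangent vector produces the second term in~\eqref{Bapp}. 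Third, for the Jacobian factor one notes that \(d_x\varphi_t|_T = (\Id+tL)|_T\), so
\[
J\varphi_t(x,T)^2 = \det_T\!\bigl((\Id + tL)^*(\Id + tL)\bigr) = 1 + 2t\,\tr_T(L) + O(t^2),
\]
where \(\tr_T\) denotes the trace of the restriction to \(T\). Using \(T^2 = T = T^*\), one checks \(\tr_T(L) = \tr(TLT) = \tr(TL) = T:L\); taking square roots yields
\[
\left.\tfrac{d}{dt}\right|_{t=0} J\varphi_t(x,T) = T:L = T:Dg(x),
\]
which, multiplied by \(F(x,T)\), produces the first summand in~\eqref{Bapp}.

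Summing the three contributions gives precisely
\[
\left.\tfrac{d}{dt}\right|_{t=0}\Psi_t(x,T) = \langle d_x F(x,T),\, g(x)\rangle + B_F(x,T) : Dg(x),
\]
with \(B_F\) as in~\eqref{Bapp}, and integrating against \(V\) yields~\eqref{varapp}. The only mildly delicate point is the Jacobian expansion — one must be careful that it is the trace \emph{restricted to \(T\)} rather than the full trace of \(L\) that appears, but this is exactly \(T:L\) since \(T\) is the orthogonal projection onto \(T\). Everything else is an application of the chain rule together with the preparatory Lemma~\ref{der}.
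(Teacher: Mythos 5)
Your proof is correct and follows essentially the same route as the paper: differentiate under the integral, split the derivative of $F(\varphi_t(x), d_x\varphi_t(T))\,J\varphi_t(x,T)$ into the three contributions from the $x$-argument, the Grassmannian argument (via Lemma~\ref{der}), and the Jacobian. The only cosmetic difference is that you carry out the expansion $J\varphi_t = 1 + t\,(T\!:\!Dg) + O(t^2)$ explicitly (correctly identifying $\tr_T(L)=\tr(TLT)=T\!:\!L$), whereas the paper cites Simon's lecture notes for that classical computation.
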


\begin{proof}
For \(g\in C_c^1(\Omega,\R^n)\) let \(\varphi_t(x)=x+tg(x)\) which is a diffeomorphism of \(\Omega\) into itself for \(t\ll1\).  We have 
\begin{equation*}\label{derivate}
\begin{split}
\delta_F V(g)&=\frac{d}{dt}\FF (\varphi_t^{\#}V) \Big|_{t=0}\\ 
&=\frac{d}{dt} \int_{G(\Omega)}  F (\varphi_t(x), d\varphi_t (T)) J\varphi_t(x,T) dV(x,T)\Big|_{t=0}\\
&=\int_{G(\Omega)}  \frac{d}{dt}F (\varphi_t(x), T)  dV(x,T)\Big|_{t=0}+\int_{G(\Omega)}  \frac{d}{dt} F (x, d\varphi_t (T)) dV(x,T)\Big|_{t=0}\\
&\quad+\int_{G(\Omega)}  F (x, T) \frac{d}{dt}J\varphi_t(x,T) \Big|_{t=0}dV(x,T).
\end{split}
\end{equation*}
Equation~\eqref{varapp} then follows by the definition of \(B_F(x,T)\),~\eqref{Bapp}, and  the equalities
\begin{align}
 \frac{d}{dt}F (\varphi_t(x), T)\Big|_{t=0}&=\langle d_xF(x,T),g(x)\rangle, \label{11}\\
  \frac{d}{dt}J\varphi_t(x,T)\Big|_{t=0}&=T:Dg(x),\label{33}\\
 \frac{d}{dt}F (x, d\varphi_t (T))\Big|_{t=0} &=\big\langle d_T F(x,T) ,\,T^\perp\circ Dg(x)\circ T +(T^\perp\circ Dg(x)\circ T)^*\big\rangle.\label{22}
\end{align}
Here~\eqref{11} is trivial,~\eqref{33} is a classical computation, see for instance~\cite[Section 2.5]{SimonLN}, and~\eqref{22} follows from Lemma~\ref{der}.
\end{proof}

\section{Proof of Lemma~\ref{rect}}\label{Lemma}
In this Section we prove  Lemma~\ref{rect}. Let us start recalling 
 the following rectifiability criterion due to Preiss, see~\cite[Theorem 5.3]{Preiss}.

\begin{theorem}\label{Preiss}
Let $\mu$ be a measure on $\R^n$ and assume that at 
$\mu$-a.e. $x$ the following two conditions are satisfied:
\begin{itemize}
\item[(I)]If we set $\alpha=\alpha_d=1-2^{-d-6}$ and 
$$
E_r(x):= \left \{z \in B_r(x):\, 
\textrm{ 
\(\exists s \in (0,r) \) satisfying  \(\dfrac{\mu(B_s(z))}{\omega_ d s^d}\leq \alpha  \frac{\mu(B_r(x))}{\omega_d r^d}\)}\right \},
$$
then
$$\liminf_{r\to 0}\frac{\mu(E_r(x))}{\mu(B_r(x))}=0;$$
\item[(II)]  If we set $\beta=\beta_d=2^{-d-9}d^{-4}$ and 
$$
F_r(x):=\sup_{T\in G(n,d)}\left \{\inf_{z\in (x+T)\cap B_r(x)} \frac{\mu(B_{\beta r}(z))}{\mu(B_r(x))}\right\},
$$
then
$$ \liminf_{r\to 0}F_r(x)>0.$$
\end{itemize}
Then $\mu$ is a $d$-rectifiable measure.
\end{theorem}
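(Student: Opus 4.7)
The statement is a specific form of Preiss's deep rectifiability theorem, and a self-contained proof would essentially have to reproduce the analysis of~\cite[Section 5]{Preiss}. My plan is to proceed via tangent measure analysis: I would use conditions (I) and (II) to force tangent measures at $\mu$-almost every point to be flat $d$-dimensional Hausdorff measures, and then invoke the classical implication that the existence of a flat tangent measure at $\mu$-a.e.\ $x$ implies $d$-rectifiability.

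\emph{First}, I would use condition (I) to derive a scale-invariance property for tangent measures. Condition (I) asserts that, at $\mu$-a.e.\ $x$, the relative mass of the set $E_r(x)$ of points witnessing a drop of the normalized density by the factor $\alpha$ at some smaller scale vanishes along some $r_j \to 0$. Choosing a blow-up sequence along which simultaneously $\mu(E_{r_j}(x))/\mu(B_{r_j}(x)) \to 0$ and $\mu_{x,r_j} \weakstarto \sigma$, a lower-semicontinuity argument transfers the bound to $\sigma$: for $\sigma$-a.e.\ $z$ and every small $s$, the density ratio $\sigma(B_s(z))/(\omega_d s^d)$ remains comparable, up to the factor $\alpha$, to $\sigma(B)/\omega_d$. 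Iterating, every tangent measure to $\mu$ turns out to be uniformly $d$-dimensional in the sense of Preiss.

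\emph{Second}, I would use condition (II) to force these tangent measures to be geometrically flat. At each sufficiently small scale $r$, condition (II) selects some $T_r \in G(n,d)$ such that every point of $(x+T_r)\cap B_r(x)$ carries, in its $\beta r$-neighborhood, a fixed fraction of $\mu(B_r(x))$. Using compactness of the Grassmannian, I would pass to a subsequence along which $T_{r_j} \to T$, so that the limiting tangent measure satisfies $\sigma(B_\beta(z)) \geq c > 0$ for every $z \in T \cap B_{1-\beta}$. Combined with the uniformity from the previous step, a Marstrand-type argument identifies $\sigma$ with a constant multiple of $\mathcal{H}^d \res T$.

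\emph{Finally}, having produced at $\mu$-a.e.\ $x$ a tangent measure of the form $\sigma = c\,\mathcal{H}^d \res T_x$ with $c > 0$ and $T_x \in G(n,d)$, I would invoke Preiss's theorem stating that possession of such a flat tangent measure at $\mu$-a.e.\ point forces $\mu$ to be $d$-rectifiable. The main obstacle, and the true heart of the argument, is exactly this last step: promoting a single flat tangent measure at $\mu$-a.e.\ $x$ to actual rectifiability requires Preiss's delicate analysis of the moments of tangent measures together with the connectedness of ${\rm Tan}(x,\mu)$. There is no genuinely elementary substitute for this, and a fully self-contained proof would unavoidably have to reproduce that machinery.
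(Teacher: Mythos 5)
The paper does not prove this statement; it quotes it verbatim as a known result, citing Preiss's \emph{Geometry of measures in $\mathbb{R}^n$} [Theorem~5.3], and uses it as a black box in the proof of Lemma~\ref{rect}. There is therefore no ``paper proof'' to compare your proposal against, and the correct answer to this exercise is simply to recognize the theorem as an imported result. Your sketch is a fair bird's-eye description of where the argument lives, and your closing remark --- that the decisive step, passing from the existence of flat tangent measures at $\mu$-a.e.\ point to genuine rectifiability, has no elementary substitute --- is exactly the right assessment.

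That said, if you were to try to fill in the sketch you would run into two concrete obstacles. First, in your reading of condition~(I): the condition asserts that the relative mass of the bad set $E_r(x)$ tends to zero along \emph{some} sequence $r\to 0$, which, after passing to a blow-up, controls the lower density ratio of $\sigma$ by the factor $\alpha<1$ at one generation of scales only; making this quantitative uniformity persist across all scales requires iterating on tangent measures of tangent measures (Preiss's connectedness and ``cone of measures'' machinery), not a single lower-semicontinuity argument. Second, in your step~3: the implication ``flat tangent at $\mu$-a.e.\ $x$ $\Rightarrow$ $d$-rectifiable'' is itself one of the deepest theorems in Preiss's paper and needs positive lower and finite upper $d$-density of $\mu$ at $\mu$-a.e.\ point as a hypothesis --- hypotheses that conditions~(I) and~(II) encode only implicitly and that would need to be extracted explicitly before the implication can be invoked. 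Neither of these is a fatal flaw of your outline, but they mean the proposal is a roadmap into Preiss's paper rather than a proof, which you yourself acknowledge.
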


\begin{proof}[Proof of Lemma~\ref{rect}]
By replacing \(\mu\) with \(\mu\res \Omega'\), where \(\Omega'\cc \Omega\), we can assume that \(\mu\) is defined on the whole \(\R^n\).
We are going to prove that \(\mu\) verifies conditions (I) and (II) in Theorem~\ref{Preiss}. 

Let us start by verifying condition (I). 
Given $\e,m>0$, let
$$E(\e,m):=\left \{z\in\R^n: \quad \frac{\mu(B_r(z))}{\omega_d r^d} > m\, \textrm{ for all  \(r\in (0,\e)\)} \right \},$$
and, for \(\alpha=\alpha_d\) as in Theorem~\ref{Preiss} and \(\gamma\in (1,1/\alpha)\), set
$$
\widehat{E}(\e,m):=E(\e,\alpha \gamma m)\setminus  \bigcup_{k=1}^{\infty}E\left(\frac \e k, m\right).
$$
If \(x\) is  such that  \(0<\Theta^d_*(x,\mu)<+\infty\), then 
\(x\in \widehat{E}(\bar \e,\bar m)\) for some positive \(\bar \e\)  and \( \bar m\) 
such that \( \alpha\gamma  \bar m <\Theta^d_*(x,\mu)< \bar m\), hence
\[
\{ 0<\Theta^d_*(x,\mu)<+\infty\} \subset \bigcup_{m>0} \bigcup_{\e>0} \widehat{E}(\e,m).
\]
Let  now \(x\in \widehat{E}(\e,m)\) be a density point for \(\widehat{E}(\e,m)\):
\begin{equation}\label{echecazzo}
\lim_{r\to 0} \frac{\mu (B_r(x)\setminus \widehat{E}(\e,m))}{\mu(B_r(x))}=0.
\end{equation}
Note that \(x\in \widehat{E}(\e,m)\) implies that \( \alpha\gamma m\le\Theta^d_*(x,\mu)\le  m<\gamma m\). Hence,  if $(r_k)_k$ is a sequence verifying \(r_k\to 0\), \(r_k< \e\) and such 
that \(\Theta^d_*(x,\mu)=\lim_{k} \mu(B_{r_k}(x))/\omega_d r_k^d\), then, for \(k\) large enough,  
\[
E_{r_k}(x) \subset  B_{r_k}(x)\setminus E(\e,\alpha \gamma m) 
\subset  B_{r_k}(x)\setminus  \widehat E(\e,m),
\]
which, together with \eqref{echecazzo}, proves that \(\mu\) verifies condition (I).

We now verify condition (II). Let \(x\) be a point such that all the tangent measures at \(x\) are translation invariant in the directions of \(T_x\) and such that \( 0<\Theta^d_*(x,\mu)\le \Theta^{d*}(x,\mu)<+\infty\). Note  that the latter condition implies that for every \(\sigma\in {\rm Tan}(x,\mu)\)
\begin{equation*}
\frac{\Theta^d_*(x,\mu)}{\Theta^{d*}(x,\mu)} t^d\le \sigma( B_t)\le  \frac{\Theta^{d*}(x,\mu)}{\Theta^{d}_*(x,\mu)} t^d\qquad\textrm{for all \(t\in (0,1)\)}.
\end{equation*}
In particular, \(0\in \spt\, \sigma\) for all \(\sigma\in {\rm Tan}(x,\mu)\). 
Let us choose a sequence $r_i\to 0$ and \(z_{r_i}\in (x+T_x)\cap B_{r_i}(x)\), such that
\[
\begin{split}
\liminf_{r\to 0} \Bigg\{\inf_{z\in (x+T_x)\cap B_r(x)}\frac{\mu(B_{\beta r}(z))}{\mu(B_r(x))}\Bigg\}&=\lim_{i\to \infty} \frac{\mu (B_{\beta r_i}(z_{r_i}))}{\mu(B_{r_i}(x))}\\
&\ge \lim_{i\to \infty} \mu_{x,r_i}\left (B_{\beta}\left (\frac{z_{r_i}-x}{r_i}\right )\right ),
\end{split}
\]
where \(\mu_{x,r_i}\) is defined in \eqref{blowup} and \(\beta=\beta_d\) is as in Theorem~\ref{Preiss}. Up to subsequences we have that 
$$
\lim_{i\to \infty} \mu_{x,r_i}\weakstarto  \sigma\in {\rm Tan}(x,\mu)\qquad\textrm{and}\qquad
\lim_{i\to \infty} \frac{z_{r_i}-x}{r_i}=z \in \bar{B}\cap T_x.
$$
Hence 
\begin{equation*}\label{2}
\liminf_{r\to 0} \Bigg\{\inf_{z_r\in (x+T_x)\cap B_r(x)}\frac{\mu(B_{\beta r}(z_r))}{\mu(B_r(x))}\Bigg\}\geq \sigma (B_{\beta }(z)).
 \end{equation*}
Let  $z'\in B_{\beta/2}(z)\cap T_x $ such that $B_{\beta/2}(z')\subset B_{{\beta}}(z)\cap B$. Since  $\sigma$ is translation invariant in the directions of  $T_x$ 
\begin{equation*}\label{1}
\sigma(B_{\beta }(z))
\geq \sigma (B_{\frac{\beta}{2}}(z'))=\sigma (B_{\frac{\beta}{2}}(0))>0,
\end{equation*}
where in the last inequality we have used that \(0\in \spt\, \sigma\). Thus
\[
\liminf_{r\to 0} F_r(x)\ge \liminf_{r\to 0} \Bigg\{\inf_{z\in (x+T_x)\cap B_r(x)}\frac{\mu(B_{\beta r}(z))}{\mu(B_r(x))}\Bigg\}>0,
\]
implying that also condition  (II) in Theorem~\ref{Preiss} is satisfied. Hence  \(\mu\) is \(d\)-rectifiable. 
 In particular for \(\mu\)-a.e. \(x\), \({\rm Tan}(x,\mu)=\{\omega_d^{-1}\H^d\res( T_x K\cap B)\}\). Since, by assumption, \(\mu\) is invariant along the directions of \(T_x\),  this implies that 
 \(T_x=T_x K\) and concludes the proof.
\end{proof}


\end{document}